\title{The union-closed sets conjecture almost holds for almost all random bipartite graphs}
\author{Henning Bruhn and Oliver Schaudt}
\date{}
\begin{document}

\newtheorem{theorem}{Theorem}
\newtheorem{lemma}[theorem]{Lemma}
\newtheorem{observation}[theorem]{Observation}
\newtheorem{corollary}[theorem]{Corollary}
\newtheorem{claim}[theorem]{Claim}
\newtheorem{conjecture}[theorem]{Conjecture}

\def\exp{\mbox{\rm{E}}}
\def\pr{\mbox{\rm Pr}}

\def\stab{\mbox{stab}}

\newcommand{\smallmss}{\ensuremath{\mathcal{S}_G}}

\newcommand{\mymargin}[1]{%
  \marginpar{%
    \begin{minipage}{\marginparwidth}\small%
      \begin{flushleft}%
        #1%
      \end{flushleft}%
   \end{minipage}%
  }%
}%

\renewcommand{\epsilon}{\varepsilon}
\newcommand{\rbip}{\mathcal B}

\newcommand{\mstab}{\mathcal{A}}
\newcommand{\lrside}{{\lfloor\log_{1/q}(m)\rfloor}}
\newcommand{\lmss}{\mathcal L_G}
\newcommand{\lmssthird}{\lmss'}
\newcommand{\lmsshalf}{\lmss}
\newcommand{\lmssdelta}{\lmss^\delta}

\newcommand{\sm}{\setminus}
\newcommand{\av}{\mbox{\rm{left-avg}\ensuremath{(G)}}}
\newcommand{\rav}{\mbox{\rm{right-avg}(G)}}
\newcommand{\hoeffmss}{\ensuremath{\mathcal{S}'_G}}
\newcommand{\Lside}{\lfloor\log_{1/q}(n)\rfloor}
\newcommand{\Rside}{\lfloor\log_{1/q}(m)\rfloor}

\newcommand{\comment}[1]{{\newline\bigskip 
\marginpar{\bf COMMENT} \em #1\newline\bigskip}}
\renewcommand{\comment}[1]{}

\maketitle

\begin{abstract}
Frankl's union-closed sets conjecture states that in every
 finite union-closed set of sets, there is an element that is contained 
in at least half of the member-sets 
(provided there are at least two members).
The conjecture has an equivalent formulation in terms of graphs: 
In every bipartite graph with least one edge, both colour classes contain a vertex belonging to at most half of the maximal stable sets. 

We prove that, for every fixed edge-probability,
almost every random bipartite graph almost satisfies Frankl's conjecture.



\end{abstract}

\section{Introduction}

One of the most basic conjectures in 
extremal set theory is Frankl's conjecture on union-closed set 
systems. 
A set $\mathcal X$ of sets is  \textit{union-closed} if 
$X \cup Y \in \mathcal X$
for all $X,Y \in\mathcal X$.

\newtheorem*{ucsc}{Union-closed sets conjecture}
\begin{ucsc}
Let $\mathcal X\neq \{ \emptyset\}$ be a finite union-closed set of  sets. 
Then there is a $x\in\bigcup_{X\in\mathcal X}X$
that lies in at least half of the members of~$\mathcal X$.
\end{ucsc}

While Frankl~\cite{Frankl} dates the conjecture to 1979, 
it apparently did not appear in print before
1985, when it was mentioned as an open problem in Rival~\cite{Riv85}. 
Despite being widely known, there is only little substantial progress on the conjecture.
 
The conjecture has two equivalent formulations, one in terms of lattices and one 
in terms of graphs. For the latter, let us say that 
a  vertex set $S$ in a graph is \emph{stable} if 
no two of its vertices are adjacent, and that it is \emph{maximally stable} if, in 
addition, every vertex outside $S$ has a neighbour in~$S$.

\begin{conjecture}[Bruhn, Charbit and Telle~\cite{BCT12}]\label{mstabconj}
Let $G$ be a bipartite graph with at least one edge. 
Then each of the two bipartition classes contains a vertex 
belonging to at most half of the maximal stable sets. 
\end{conjecture}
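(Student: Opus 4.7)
Since Conjecture \ref{mstabconj} is equivalent to Frankl's union-closed sets conjecture (this equivalence being the content of \cite{BCT12}), a complete unconditional proof is not within reach of current techniques; what follows is the attack I would attempt. Write $\mathcal S(G)$ for the set of maximal stable sets of $G$ and, for each vertex $v$, set $d(v) = |\{S \in \mathcal S(G) : v \in S\}|$. The task is to exhibit $a^* \in A$ and $b^* \in B$ with $d(a^*), d(b^*) \leq |\mathcal S(G)|/2$, where $A, B$ are the two colour classes.

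The natural approach is induction on $|V(G)|$, driven by a vertex split. For any $v \in V(G)$, the maximal stable sets of $G$ partition according to whether they contain $v$: those that do are in bijection with the maximal stable sets of $G - N[v]$ (extended by $v$ together with any vertices newly isolated), and those that do not are the maximal stable sets of $G - v$ in which some vertex of $N(v)$ appears. Applying the inductive hypothesis to these two smaller bipartite graphs yields candidate half-abundance vertices on each side, and the plan is to lift them to $G$. A natural first choice of $v$ is a maximum-degree vertex in $A$: then $d(v) = |\mathcal S(G - N[v])|$ is automatically small compared to $|\mathcal S(G)|$, which immediately delivers the $A$-side witness and leaves only the $B$-side to produce.

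The main obstacle is precisely the step that has blocked every attack on Frankl's conjecture: no local rule for choosing $v$ is known to guarantee that the inductively obtained half-abundance vertex of $G - N[v]$ or $G - v$ remains half-abundant inside $G$ once the stable sets from the other side of the split are added back in. Degree extremality, twin reductions, and module decompositions all break on small examples. A parallel approach I would pursue is a weighted reformulation: assign positive weights $w(S)$ to maximal stable sets, let $d_w(v) = \sum_{S \ni v} w(S)$, and try to prove existence of vertices with $d_w(\cdot) \leq \tfrac12 \sum_S w(S)$ for every weighting. By LP duality this translates into a fractional covering statement about pairs $(a,b) \in A \times B$, which is arguably more amenable to entropy or probabilistic arguments, though essentially as hard as Frankl's conjecture itself. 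The fact that the full conjecture lies beyond present methods is, no doubt, the reason the present paper settles for the ``almost for almost all'' version in the random bipartite model.
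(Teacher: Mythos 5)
You correctly identify that the statement labeled Conjecture~\ref{mstabconj} is not a theorem of this paper, nor of any paper: it is an open conjecture, shown in \cite{BCT12} to be equivalent to Frankl's union-closed sets conjecture, and the authors here do not claim to prove it. What the paper actually proves is the weakening stated as Theorem~\ref{intromainthm} (and its technical core, Theorem~\ref{realmainthm}): for every fixed edge probability $p$ and every $\delta>0$, almost every random bipartite graph satisfies the union-closed sets conjecture \emph{up to $\delta$}. So there is no proof in the paper to compare your attempt against, and your honesty about that is the right call.

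Two remarks on the substance of your sketch, for what they are worth. First, your bijection between maximal stable sets of $G$ containing $v$ and maximal stable sets of $G - N[v]$ is not quite right as stated: if $S \ni v$ is maximal stable in $G$, then $S\setminus\{v\}$ need not be maximal in $G-N[v]$ (vertices of $N(v)$ are gone, but $S$ may fail to dominate other vertices that were previously dominated only via $N(v)$); one needs to handle the newly created isolated or undominated vertices with care, and the resulting correspondence does not split cleanly into two independent subproblems on which the inductive hypothesis can be applied side by side. Second, and more to the point of the paper: the route the authors take for the weakened statement is neither induction nor LP duality, but \emph{averaging}. Lemma~\ref{avlem} reduces the problem to showing that the average left-side size of a maximal stable set is at most $(\tfrac12+\delta)m$, and Lemma~\ref{largeandsmall} reduces that to comparing counts of maximal stable sets with large versus small left side. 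The bulk of the work (Sections~4.1--4.6) is then a case analysis in $(m,n)$ using Markov, Chebyshev, and Hoeffding bounds on these counts. If you want a proof attempt that tracks the paper, the target should be Theorem~\ref{realmainthm}, not Conjecture~\ref{mstabconj}.
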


We prove a slight weakening of Conjecture~\ref{mstabconj} for random bipartite graphs.
For $\delta>0$,
we say that a bipartite graph \emph{satisfies the union-closed sets conjecture up to~$\delta$}
if 
each of its two bipartition classes has a vertex
for which the number of maximal stable sets containing it is at most
$\tfrac{1}{2}+\delta$ times the total number 
of maximal stable sets. 
A \emph{random bipartite graph} is a graph on bipartition classes of cardinalities $m$ and $n$, 
where any two vertices from different classes are independently joined by an 
edge with probability $p$.
We say that \textit{almost every} random bipartite graph has property $P$ if 
for every $\epsilon >0$ there is an $N$ such that, whenever $m+n \ge N$, 
the probability that a random bipartite graph on $m+n$ vertices 
has $P$ 
is at least $1-\epsilon$.

We prove:
\begin{theorem}\label{intromainthm}
Let $p\in (0,1)$ be a  fixed edge-probability.
For every $\delta>0$, almost every random bipartite graph satisfies the union-closed sets conjecture up to~$\delta$.
\end{theorem}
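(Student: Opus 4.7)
Let $A,B$ be the colour classes of the random bipartite graph $G$, of sizes $m$ and $n$, with edge probability $p$ and $q=1-p$; write $\smallmss$ for the family of maximal stable sets of $G$. Each $S\in\smallmss$ splits as $S=S_A\cup S_B$ with $S_A\subseteq A$, $S_B\subseteq B$, and is determined by $S_B$ alone through $S_A=\{a\in A:N(a)\cap S_B=\emptyset\}$, with maximality forcing every $b\in B\sm S_B$ to have a neighbour in $S_A$. The whole argument rests on the elementary double-counting identity
\[
\sum_{a\in A}\bigl|\{S\in\smallmss:a\in S\}\bigr|=\sum_{S\in\smallmss}|S_A|,
\]
which shows that, writing $\av=|\smallmss|^{-1}\sum_{S\in\smallmss}|S_A|$, the ratio $\av/m$ is the mean over $a\in A$ of the fraction of maximal stable sets through $a$. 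Some vertex of $A$ therefore attains this mean or less, so it suffices to prove, with probability $1-o(1)$, both $\av\le(\tfrac12+\delta)m$ and the symmetric $\rav\le(\tfrac12+\delta)n$; the definition of ``satisfies the union-closed sets conjecture up to $\delta$'' is then met.

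I would obtain the much stronger bound $\av=O(\log n)$ from two moment inputs. The starting point is the exact first-moment formula
\[
\mathbb{E}\bigl|\{S\in\smallmss:|S_A|=j,\ |S_B|=k\}\bigr|=\binom{m}{j}\binom{n}{k}q^{jk}(1-q^k)^{m-j}(1-q^j)^{n-k},
\]
whose maximum lies near $(j,k)=(\Lside,\Rside)$ and which decays quickly as $j$ or $k$ drifts from these values. Summing this against the weight $j$ gives $\mathbb{E}\sum_{S\in\smallmss}|S_A|\le C(\log n)\,\mathbb{E}|\smallmss|$ for a constant $C=C(p)$, whence Markov yields the same bound (with a slightly worse constant) with probability $1-o(1)$. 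A matching lower bound $|\smallmss|\ge(1-o(1))\mathbb{E}|\smallmss|$, obtained from a second-moment or Janson-type computation, then gives $\av=O(\log n)$, far below $(\tfrac12+\delta)m$ once $m+n$ is large.

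The principal obstacle is the lower bound on $|\smallmss|$. Two maximal stable sets whose $B$-parts overlap substantially share many conditioning non-edges and are far from independent, so the second-moment sum must be stratified by $|S_B\cap S_B'|$ and $|S_A\cap S_A'|$, with the joint maximality probability in each stratum bounded by isolating the edges incident to $A\sm(S_A\cup S_A')$ and $B\sm(S_B\cup S_B')$ from the rest. It is cleanest to first restrict attention to a Hoeffding-regular sub-family, presumably the $\hoeffmss$ of the paper, consisting of those $S$ with $|S_A|,|S_B|$ inside narrow concentration windows around $\Lside,\Rside$ where the bulk of $\mathbb{E}|\smallmss|$ is supported; the contribution of the remaining maximal stable sets to both the numerator and the denominator of $\av$ can be dispatched by a direct first-moment bound. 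Once concentration of $|\smallmss|$ is in hand, the averaging identity from the first paragraph closes the proof on both sides.
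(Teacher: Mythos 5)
Your setup---the double-counting identity, the observation that it suffices to show $\av\le(\tfrac12+\delta)m$ (and symmetrically for the right side), and the exact first-moment formula for the number of maximal stable sets of given left- and right-side sizes---agrees with the paper's Lemmas~\ref{avlem} and~\ref{mssproba}. But the claimed uniform bound $\av=O(\log n)$ is false, and it is false in exactly the regimes that force the paper into a case analysis. When $n$ is a bounded constant while $m\to\infty$, the left side of a typical maximal stable set is $\Theta(m)$, not $O(1)$: for instance with $n=2$ and $p<2-\sqrt{2}$ one computes $\av\approx m\bigl(\tfrac{1+q}{2}\bigr)^2>\tfrac m2$, so averaging over $L(G)$ genuinely fails and the paper must supply a non-averaging argument (Lemma~\ref{constrightside}: with high probability some $v\in L(G)$ is adjacent to all of $R(G)$ and is therefore unstable). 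At the other extreme, once $n\ge q^{-m/2}$ one has $\log_{1/q}(n)\ge m/2$, so ``$O(\log n)$'' is not below $(\tfrac12+\delta)m$; indeed for $n\ge q^{-m^3}$ the paper shows $\av=\tfrac m2$ exactly, which is consistent with, but not improved by, your bound. Your inference ``$O(\log n)$, far below $(\tfrac12+\delta)m$ once $m+n$ is large'' simply does not hold when one side grows much faster than the other.

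The second-moment plan also cannot carry the whole parameter space. The paper proves via Chebyshev that $\smallmss$ concentrates only for $m\le q^{-\sqrt[5]{n}}$ and $n\le q^{-\sqrt[5]{m}}$, and explicitly remarks that the method breaks once $n\ge q^{-\sqrt m}$, because the variance ratio involves terms like $\lfloor\log_{1/q}(n)\rfloor^2/m$ which then fail to vanish. Beyond that the paper has to split $R(G)$ into many large blocks and apply Hoeffding to each block separately; this is not a technical refinement of the second-moment computation but a different device. Finally, in the window $q^{-m/2}\le n\le q^{-m^3}$ the average is provably so close to $\tfrac m2$ that the paper can only bound it by $(\tfrac12+\delta)m$---this is the whole reason the theorem is phrased ``up to $\delta$.'' Any argument that claimed $\av=O(\log n)$ uniformly would in particular prove the unrelaxed average bound $\av\le\tfrac m2$, which the authors could not do and which the example with small constant $n$ shows is false. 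You need, at minimum, a separate argument for bounded $n$, a replacement for Chebyshev when the sides are very unbalanced, and an explicit treatment of the near-critical window---precisely the structure of the paper's proof.
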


While Frankl's conjecture has attracted quite a lot of interest,  
a proof seems still  out of reach.
For a fairly complete bibliography on the  conjecture, see 
Cz\'edli, Mar\'oti and Schmidt~\cite{CMS09}.

Some of the earliest results verified the conjecture for few sets or few 
elements in the ground set, that is, when $n=|\mathcal X|$ or $m=|\bigcup_{X\in\mathcal X}X|$ are small. 
The current best results show that the conjecture holds for $m\leq 11$, 
which is due to Bo\v{s}njak and Markovi\'{c}~\cite{BM08}, and for $n\leq 46$, proved by Nishimura and Takahashi~\cite{NT96} and independently Roberts and Simpson~\cite{RSi10}. 
The conjecture is also known to be true when $n$ is large compared to $n$, 
that is $n\geq 2^m-\tfrac{1}{2}\sqrt{2^m}$ (Nishimura and Takahashi~\cite{NT96}).
The latter result was improved upon by Cz\'edli~\cite{Cze09}, who shows 
that $n\geq 2^m-\sqrt{2^m}$
is enough. 
Recently, Balla, Bollobas and Eccles~\cite{BBE13} pushed this to $n \ge \lceil \tfrac{1}{3}2^{m+1} \rceil$.

The lattice formulation of the conjecture was apparently known from very early on, 
as it is already mentioned in Rival~\cite{Riv85}. 
Poonen~\cite{Poo92} investigated several variants and gave proofs for geometric as 
well as distributive lattices. Reinhold~\cite{Rei00} extended this, with a very concise argument, to 
lower semimodular lattices. Finally, the conjecture holds as well for 
large semimodular lattices and for planar semimodular lattices (Cz\'edli and Schmidt~\cite{CS08}).

The third view, in terms of graphs, on the union-closed sets conjecture
is more recent. So far, the graph formulation is only verified for 
chordal-bipartite graphs and for bipartitioned circular interval graphs 
(Bruhn, Charbit and Telle~\cite{BCT12}).

One of the main techniques that is used for the set formulation of 
Frankl's conjecture as well as for 
the lattice formulation, is averaging: The average  frequency of an element 
is computed, and if that average is at least half of the size of the set
system, it is concluded that the conjecture holds for the set system.
Averaging is also our main tool. We discuss averaging and its limits in Section~\ref{avsec}.

\section{Basic tools and definitions}

In our graph-theoretic notation we usually follow Diestel~\cite{diestelBook10}, while we 
refer to Bollob\'as~\cite{BolloBook01} for more details on random graphs.

All our graphs are finite and simple. We always consider a bipartite graph $G$
to have a fixed bipartition, which we denote by $(L(G),R(G))$. When discussing the 
bipartition classes, we will often refer to $L(G)$ as the \emph{left side}
and to $R(G)$ as the \emph{right side} of the graph.

Throughout the paper we consider a fixed edge probability $p$ with $0 < p < 1$;
and we will always put $q = 1-p$.
A \textit{random bipartite graph} $G$ 
is a bipartite graph where 
every pair $u \in L(G)$ and $v \in R(G)$ is joined by an edge independently with probability $p$.
We denote by $\rbip(m,n;p)$ 
the probability space whose elements are the random bipartite
graphs $G$ with $|L(G)|=m$ and $|R(G)|=n$.
We will always tacitly assume that $m\geq 1$ and $n\geq 1$. Indeed, 
if one of the sides of the random bipartite graph is empty, then the graph has no 
edge and is therefore trivial with respect to Conjecture~\ref{mstabconj}.

Markov's inequality states that for a non-negative random variable $X$ and any $\alpha > 0$, 
\begin{equation}\label{markov}
\pr[X \ge \alpha] \le \frac{\exp[X]}{\alpha}.
\end{equation}

Chebyshev's inequality is as follows.
Let $X$ be a random variable with finite variance $\sigma^2 = \exp[X^2] - \exp[X]^2$. 
Then, for every real $\lambda > 0$,
\begin{equation}\label{cheb}
\pr[|X-\exp[X]|\geq \lambda] \leq \frac{\sigma^2}{\lambda^2}. 
\end{equation}

%

\section{Discussion of averaging}
\label{avsec}

Most of the partial results on Frankl's conjecture are based on one of
two techniques: \emph{Local configurations} and \emph{averaging}. By a local configuration 
we mean a subsystem of the union-closed set system $\mathcal X$, that guarantees
that one element of the ground set lies in at least half of the members of $\mathcal X$. 
For instance, one of the earliest results is the observation of Sarvate and Renaud~\cite{SR90b}
that the element of a singleton will always
belong to at least half of the sets.  
More local configurations have later been found by Poonen~\cite{Poo92}, 
Vaughan~\cite{Vaug02}, Morris~\cite{Mor04} and others.

The second technique consists in taking the average of the number of member sets
containing a given element, where the average ranges over the set 
$U=\bigcup_{X\in\mathcal X}X$ of all elements. If that average is at least $\tfrac{1}{2}|U|$ 
then clearly $\mathcal X$ will satisfy the conjecture. 
Averaging was used successfully by  Cz\'edli~\cite{Cze09} to show that the conjecture
holds when there are vastly more sets than elements, 
that is, when $|\mathcal X|\geq 2^{|U|}-\sqrt{2^{|U|}}$.
Reimer~\cite{Rei03} showed that the average is always at least $\log_2(|U|)$. 

Averaging will not always work. It is easy to construct union-closed set systems
in which the average is too low.
Cz{\'e}dli, Mar{\'o}ti and Schmidt~\cite{CMS09} 
even found such  set systems of size 
$|\mathcal X|=\lfloor 2^{|U|+1}/3\rfloor$.
Nevertheless, we will see that, in the graph formulation, 
averaging will almost always allow us to conclude that 
the union-closed sets conjecture is satisfied (up to any $\delta>0$).
\medskip

To describe the averaging technique for bipartite graphs, let us 
write $\mstab(G)$ 
for the set of maximal stable sets of a bipartite graph $G$. 
The graph formulation of the union-closed sets conjecture, Conjecture~\ref{mstabconj},
is satisfied if $G$ contains an \emph{unstable} vertex in both bipartition classes, 
that is, a vertex that lies in at most half of the maximal stable sets.
We note first that exchanging the sides turns a random bipartite graph $G\in\rbip(m,n;p)$
into a member of $\rbip(n,m;p)$, which means that it will suffice to show the 
existence of an unstable vertex in $L(G)$. All the discussion and proofs that 
follows will focus on the left side $L(G)$. 

That a vertex $v$ is unstable means that $|\mstab_v(G)|$, the number of maximal stable 
sets containing $v$, is at most $\tfrac{1}{2}|\mstab(G)|$. Thus, if for the average
\[
\sum_{v\in L(G)}\frac{|\mstab_v(G)|}{|\mstab(G)|}\leq \frac{1}{2}|L(G)|
\]
then $L(G)$ will contain an unstable vertex. Double-counting shows that 
the above average is equal to 
\[
\av:= \sum_{A\in\mstab(G)}\frac{|A\cap L(G)|}{|\mstab(G)|},
\]
and thus our aim is to show that when $m+n$ is very large,
it follows with  high probability that
$\av\leq\tfrac{m}{2}$  for any
$G\in\rbip(m,n;p)$. 

Unfortunately, we will not reach this aim. While we will show for
large parts of the parameter space $(m,n)$ that the 
average is, with high probability, small enough, we will also see
that when $n$ is roughly $q^{-\frac{m}{2}}$ the average becomes very close 
to $\tfrac{m}{2}$, so close that our tools are not sharp enough to separate
the average from slightly above $\tfrac{m}{2}$. 
Therefore, we provide for a bit more space by settling on bounding 
the average away from $(\tfrac{1}{2}+\delta)m$ for any positive $\delta$,
which then only allows us to deduce the existence of a vertex $v\in L(G)$
that is \emph{almost unstable}, in the sense that $v$ lies in at most 
$(\tfrac{1}{2}+\delta)|\mstab(G)|$ maximal stable sets. 

Much of the previous discussion is subsumed in the following lemma.

\begin{lemma}\label{avlem}
Let $G$ be a bipartite graph, and let $\delta\geq 0$.
If 
\[
\av\leq 
\left(\tfrac{1}{2}+\delta\right)|L(G)|
\]
then there exists a vertex in $L(G)$ that lies in at most 
$\left(\tfrac{1}{2}+\delta\right)|\mstab(G)|$ maximal stable sets.
\end{lemma}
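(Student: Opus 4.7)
The plan is to prove the lemma by a direct double-counting argument, essentially formalising the informal sketch given in the discussion preceding the statement. The hypothesis is phrased in terms of a weighted sum over maximal stable sets, while the conclusion asserts the existence of a single low-frequency vertex in $L(G)$; the bridge between the two is a swap in the order of summation.

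First I would record the identity
\[
\sum_{v\in L(G)}|\mstab_v(G)|=\sum_{A\in\mstab(G)}|A\cap L(G)|,
\]
which holds because both sides count, in different orders, the pairs $(v,A)$ with $v\in L(G)$, $A\in\mstab(G)$, and $v\in A$. Dividing by $|\mstab(G)|$, the right-hand side is exactly $\av$ by definition, and the left-hand side becomes $\sum_{v\in L(G)}|\mstab_v(G)|/|\mstab(G)|$. The hypothesis $\av\leq(\tfrac12+\delta)|L(G)|$ therefore rewrites as
\[
\frac{1}{|L(G)|}\sum_{v\in L(G)}\frac{|\mstab_v(G)|}{|\mstab(G)|}\leq \tfrac12+\delta.
\]

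Since the quantity on the left is the arithmetic mean of the $|L(G)|$ nonnegative numbers $|\mstab_v(G)|/|\mstab(G)|$ indexed by $v\in L(G)$, the pigeonhole principle yields at least one vertex $v^\ast\in L(G)$ with $|\mstab_{v^\ast}(G)|/|\mstab(G)|\leq \tfrac12+\delta$, that is, $|\mstab_{v^\ast}(G)|\leq(\tfrac12+\delta)|\mstab(G)|$, which is precisely the conclusion.

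There is no real obstacle: the lemma is merely a clean packaging of the averaging principle for later use, and the only point worth being careful about is that $|L(G)|\geq 1$, so that the average actually exists and the pigeonhole step is legitimate. This is ensured by the convention that both bipartition classes are nonempty (as mentioned in Section~2), and in any case the statement is vacuous otherwise.
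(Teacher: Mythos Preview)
Your proof is correct and follows essentially the same approach as the paper: both use the double-counting identity $\sum_{v\in L(G)}|\mstab_v(G)|=\sum_{A\in\mstab(G)}|A\cap L(G)|$ and then apply the pigeonhole/averaging principle to extract a single vertex below the mean. Your additional remark about needing $|L(G)|\geq 1$ is a nice point of care, though the paper simply leaves this implicit.
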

\begin{proof}
Double counting yields 
$\sum_{y\in L(G)}|\mstab_y(G)|=\sum_{A\in\mstab(G)}|A\cap L(G)|$, from
which we deduce that 
\(
\sum_{y\in L(G)}|\mstab_y(G)|\leq |L(G)|\cdot
\left(\tfrac{1}{2}+\delta\right)|\mstab(G)|.
\)
Thus there is a $y\in L(G)$ with $|\mstab_y(G)|\leq 
\left(\tfrac{1}{2}+\delta\right)|\mstab(G)|$.
\end{proof}

Most of the effort in this article will be spent on proving the 
following result, which is the heart of our main result, Theorem~\ref{intromainthm}:

\begin{theorem}\label{realmainthm}
For all $\delta>0$ and all $\epsilon>0$ there is an integer $N$ so 
that for $G\in \rbip(m,n;p)$ 
\[
\pr\left[\av\leq \left(\tfrac{1}{2}+\delta\right)m\right]\geq 1-\epsilon
\]
for all $m,n$ with $m+n\geq N$ and $n\geq \max\{20, (\lceil 3\log_{1/q}(2)\rceil+2)^2\}+1$.
\end{theorem}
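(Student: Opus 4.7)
The plan is to reduce the random ratio $\av=Z/W$, where $W:=|\mstab(G)|$ and $Z:=\sum_{A\in\mstab(G)}|A\cap L(G)|$, to the deterministic ratio $\exp[Z]/\exp[W]$ via Chebyshev concentration, and to verify by direct calculation that $\exp[Z]\le(\tfrac12+\tfrac\delta2)m\cdot\exp[W]$ whenever $m+n$ is large.

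The expectations admit clean closed forms. For a pair $(S,T)$ with $S\subseteq L(G)$, $|S|=k$, and $T\subseteq R(G)$, $|T|=\ell$, the event ``$S\cup T\in\mstab(G)$'' decomposes into three conditions on disjoint edge-blocks: there are no $S$--$T$ edges (probability $q^{k\ell}$); every $u\in L(G)\sm S$ has a neighbour in $T$ (probability $(1-q^\ell)^{m-k}$); and every $w\in R(G)\sm T$ has a neighbour in $S$ (probability $(1-q^k)^{n-\ell}$). Writing $\alpha_k$ for the number of maximal stable sets whose left-part has size exactly $k$, linearity of expectation gives
\[
\exp[\alpha_k]=\binom{m}{k}\sum_{\ell=0}^n\binom{n}{\ell}\,q^{k\ell}(1-q^\ell)^{m-k}(1-q^k)^{n-\ell},
\]
together with $\exp[W]=\sum_k\exp[\alpha_k]$ and $\exp[Z]=\sum_k k\exp[\alpha_k]$.

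To bound the ratio $\exp[Z]/\exp[W]$, I would analyse where the mass of $\exp[\alpha_k]$ lies. The factor $(1-q^\ell)^{m-k}$ forces $\ell\gtrsim\Rside$, and $(1-q^k)^{n-\ell}$ forces $k\gtrsim\Lside$, so the typical left-part size is $\min\{m,\Lside\}$. In the regime $\Lside\le(\tfrac12+\tfrac\delta4)m$, the upper tail $k>(\tfrac12+\tfrac\delta2)m$ is exponentially small and $\exp[Z]/\exp[W]\approx\Lside$, well under $m/2$. In the opposite regime, where $n$ is so large that $nq^m\to\infty$, with high probability \emph{every} subset $S\subseteq L(G)$ yields a maximal stable set $(S,R(S))$---the right-neighbourhood-complement $R(S)$ has cardinality $\approx nq^{|S|}$, which swamps the remaining left-vertices---so $\alpha_k\approx\binom{m}{k}$ and by binomial symmetry $\av\to m/2$. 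The hypothesis $n\ge\max\{20,(\lceil 3\log_{1/q}(2)\rceil+2)^2\}+1$ rules out the tiny-$n$ pathology in which nearly all of the $\exp[\alpha_k]$-mass collapses onto $k=m$ (where deterministically $\alpha_m=1$) and inflates the ratio past $m/2$. An interpolation between the two regimes then gives $\exp[Z]\le(\tfrac12+\tfrac\delta2)m\cdot\exp[W]$ for all $m+n$ large.

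The final step is Chebyshev's inequality (\ref{cheb}) applied separately to $W$ and $Z$: it suffices to show both deviate from their means by a factor at most $1\pm\delta/6$ with probability at least $1-\epsilon/2$, since then $\av=Z/W\le\frac{1+\delta/6}{1-\delta/6}\cdot\frac{\exp[Z]}{\exp[W]}\le(\tfrac12+\delta)m$. The required variance bounds come from grouping the covariances $\mathrm{Cov}(\mathbf 1_{(S,T)},\mathbf 1_{(S',T')})$ according to $|S\triangle S'|$ and $|T\triangle T'|$: two such indicators are independent whenever the three independence-blocks above use disjoint edges, and the correlated pairs contribute at most a manageable constant times $\exp[W]$. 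The main obstacle is carrying out this variance bound in the intermediate regime $n\approx(1/q)^{m/2}$, where $\Lside\approx m/2$ and neither the sparse nor the saturated intuition supplies a margin; here one may need to restrict attention to a subfamily $\lmssdelta\subseteq\mstab(G)$ of ``safe'' maximal stable sets, or to apply a Hoeffding-type bound on a family $\hoeffmss$ whose indicators decouple into independent coordinates, in order to harvest the $\sqrt n$ saving that the quadratic lower bound on $n$ is designed to supply.
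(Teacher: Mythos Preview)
Your plan has the right instincts but a genuine gap at the concentration step, which is precisely where the bulk of the paper's effort goes. The proposal hinges on Chebyshev giving $W=|\mstab(G)|$ and $Z$ within a $(1\pm\delta/6)$-factor of their means, with the correlated covariance pairs contributing ``a manageable constant times $\exp[W]$''. This is not established, and in fact a direct second-moment argument fails over most of the parameter space. The paper makes this explicit: even for the much more tractable restricted count $\smallmss$ (maximal stable sets with prescribed left and right sizes $\Lside$, $\Rside$), Chebyshev only succeeds in the narrow window $n\le q^{-\sqrt[5]{m}}$, $m\le q^{-\sqrt[5]{n}}$; once $n$ exceeds roughly $q^{-\sqrt m}$, the $(i,j)=(1,0)$ term in the variance expansion already has $B_{1,0}/\exp[\smallmss]^2$ of order $\Lside^2/m\not\to 0$, so the second moment is too large. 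Your full counts $W,Z$ are dominated by the same typical sizes and inherit the same obstruction. Your closing sentence (``one may need to restrict attention to a subfamily\ldots or to apply a Hoeffding-type bound'') is exactly right, but that \emph{is} the proof, and it is not supplied.

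The paper's route is structurally different and sidesteps concentrating $W$ or $Z$ altogether. Via Lemma~\ref{largeandsmall} it suffices to show that the number of maximal stable sets with \emph{small} left part exceeds, by a fixed factor, the number with \emph{large} left part. The large count is handled by crude first-moment or counting bounds requiring no concentration (Markov on $\stab(\ge\ell^*;\ge r^*)$ in Lemma~\ref{squpperbound}, or simply $\sum_{k\ge(1/2+\delta)m}\binom{m}{k}\le 2^{H(1/2-\delta)m}$ in the critical regime). The small count is then lower-bounded regime by regime: induced matchings when one side dwarfs the other (Lemmas~\ref{largeleftside} and~\ref{veryverylargeside}); Chebyshev on $\smallmss$ only in the balanced window (Lemma~\ref{mdmsides}); and elsewhere the Hoeffding trick of chopping $R(G)$ into $m^{\log_{1/q} m}$ independent blocks, so that a tiny per-block success probability is amplified to near~$1$ (Lemmas~\ref{largerightside}, \ref{hugerightside}, \ref{giganticrightside}). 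The slack $\delta$ in the statement is used only in the window $q^{-m/2}\le n\le q^{-m^3}$, where bounding $\lmsshalf$ is abandoned in favour of the trivially smaller $\lmssdelta$.
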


In order to show how Theorem~\ref{intromainthm} 
follows from Theorem~\ref{realmainthm}, we need to deal with the special case
when one side is of constant size while the other becomes ever larger. 
Indeed, in this case averaging might fail---for a trivial reason. 
If we fix a constant right side $R(G)$, while $L(G)$ becomes ever larger, then $L(G)$ will 
contain many isolated vertices. 
Since the isolated vertices lie in every maximal stable set
they may push up $\av$ to above $\tfrac{m}{2}$. 

However, isolated vertices are never a threat to 
Frankl's conjecture: A bipartite graph satisfies the union-closed
sets conjecture if and only if it satisfies the conjecture 
with all isolated vertices deleted. 
More generally, it turns out that the special case of a constant
right side is easily taken care of:

\begin{lemma}\label{constrightside}
Let $c$ be a positive integer, and let $\epsilon>0$. Then 
there is an $N$ so that for $G\in\rbip(m,n;p)$
\[
\pr\left[L(G)\text{\rm\ contains an unstable vertex}\right]\geq 1-\epsilon,
\]
for all $m,n$ with $m\geq N$ and $n\leq c$.
\end{lemma}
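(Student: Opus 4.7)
The plan is to exploit the fact that with $n\leq c$ constant and $m\to\infty$, each left vertex has its neighbourhood drawn from a finite pool of at most $2^c$ subsets of $R(G)$, so as $m$ grows, with high probability every possible neighbourhood pattern is realised.

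First I would partition $L(G)$ by neighbourhood type: for $A\subseteq R(G)$, set $L_A=\{v\in L(G):N(v)=A\}$. For each fixed $A$, $|L_A|\sim\text{Bin}\bigl(m,\,p^{|A|}q^{n-|A|}\bigr)$, so
\[
\pr[L_A=\emptyset]=\bigl(1-p^{|A|}q^{n-|A|}\bigr)^m\longrightarrow 0\qquad\text{as }m\to\infty.
\]
Since there are at most $2^c$ such subsets, a union bound gives an $N$ such that, for $m\geq N$, the event $\mathcal E$ that $L_A\neq\emptyset$ for every $A\subseteq R(G)$ holds with probability at least $1-\epsilon$.

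Second, I would show that on $\mathcal E$ an unstable vertex is automatic. Recall that a maximal stable set $S$ is determined by its right part $T=S\cap R(G)$ and the rule $S\cap L(G)=\{v\in L:N(v)\cap T=\emptyset\}$, subject to the maximality condition that every $r\in R\setminus T$ has some neighbour $v\in L$ with $N(v)\cap T=\emptyset$. Under $\mathcal E$ we may pick $v\in L_{\{r\}}$ for each such $r$, and then $N(v)=\{r\}\subseteq R\setminus T$ witnesses maximality. Hence every $T\subseteq R(G)$ is admissible and $|\mstab(G)|=2^{n}$. Finally, choose any $r\in R(G)$ and any $v\in L_{\{r\}}$: the maximal stable sets containing $v$ are exactly those with $T\subseteq R\setminus\{r\}$, of which there are $2^{n-1}=\tfrac12|\mstab(G)|$. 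Thus $v$ is unstable.

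I do not foresee a real obstacle. The boundedness of $R(G)$ makes the space of neighbourhood types finite, and the union bound does all the probabilistic work; the combinatorial content is the simple bijection between maximal stable sets and subsets of $R(G)$ that becomes available once every singleton neighbourhood is realised on the left.
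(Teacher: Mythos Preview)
Your argument is correct but more elaborate than the paper's. The paper simply observes that with probability at least $1-(1-p^c)^m\to 1$ some $v\in L(G)$ is adjacent to \emph{all} of $R(G)$; such a $v$ lies in only the single maximal stable set $L(G)$, and since $G$ then has an edge it has at least two maximal stable sets, so $v$ is unstable. You go in the opposite direction, using a vertex $v$ with $N(v)=\{r\}$ a singleton; to conclude that $v$ is in at most half of the maximal stable sets you first establish that every $T\subseteq R(G)$ occurs as the right side of one, giving $|\mstab(G)|=2^n$. (Incidentally, for this you only need $L_{\{r\}}\neq\emptyset$ for each $r\in R(G)$, not the full event $\mathcal E$ over all $A\subseteq R(G)$.) The paper's route needs a single witness and a one-line probability estimate; yours requires a union bound and the bijection between maximal stable sets and subsets of $R(G)$, but in return it pins down the exact count $|\mstab(G)|=2^n$ and shows that $v$ lies in exactly half of them rather than in just one.
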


\begin{proof}
Let $G$ be any bipartite graph, and
suppose there is a vertex $v\in L(G)$ that is adjacent with every
vertex in $R(G)$. Then, the only maximal stable set that contains 
$v$ is $L(G)$. Since the fact that $v$ is incident with an edge
implies that $G$ has at least two maximal stable sets, $v$ is unstable.

We now calculate the probability that there is such a vertex. 
The probability that $R(G)=N(v)$ for a fixed vertex $v\in L(G)$ is $p^{n}\geq p^c$ if $n\leq c$.
Thus the probability that no such vertex exists in $L(G)$ is at most $(1-p^c)^m$, which tends to~$0$ as $m\to\infty$. 
\end{proof}

\begin{proof}[Proof of Theorem~\ref{intromainthm}]
Let $\delta>0$ be given.
By symmetry, it is enough to show that the left side $L(G)$
of almost every random bipartite graph $G$ in $\rbip(m,n;p)$ 
contains  a vertex that lies in at most $(\tfrac{1}{2}+\delta)|\mathcal A|$
maximal stable sets. For this, consider a $\epsilon>0$, and let 
$N$ be the maximum of the $N$ given by Theorem~\ref{realmainthm}
and Lemma~\ref{constrightside} with $c=\max\{20, (\lceil 3\log_{1/q}(2)\rceil+2)^2\}$.
Consider a pair $m,n$ of positive integers with $m+n\geq N$.
If $n\leq\max\{20, (\lceil 3\log_{1/q}(2)\rceil+2)^2\}$ then Lemma~\ref{constrightside}
yields an unstable vertex in $L(G)$ with probability at least $1-\epsilon$.
If, on the other hand, $n\geq \max\{20, (\lceil 3\log_{1/q}(2)\rceil+2)^2\}+1$,
Theorem~\ref{realmainthm} becomes applicable, which is to say 
that with probability at least $1-\epsilon$ 
we have $\av\leq\left(\tfrac{1}{2}+\delta\right)m$. 
Now, Lemma~\ref{avlem} yields the desired vertex in $L(G)$.
\end{proof}

\medskip
We close this section with the obvious but useful 
observation that if there are many more maximal stable sets
with small left side than with large left side, then 
the average over the left sides is small, too.
We will use this lemma repeatedly.

\begin{lemma}\label{largeandsmall}
Let $\nu>0$ and $\delta\geq 0$, and
let $G$ be a bipartite graph with $|L(G)|=m$.
Let $\mathcal L$ be the maximal stable sets $A$ of $G$
with $|A\cap L(G)|\geq (\tfrac{1}{2}+\delta)m$, and let $\mathcal S$ 
be those maximal stable sets $B$ with $|B\cap L(G)|\leq (1-\nu)\tfrac{m}{2}$. 
If $|\mathcal S|\geq \tfrac{1}{\nu}|\mathcal L|$ then
\[
\av\leq \left(\tfrac{1}{2}+\delta\right)m.
\]
\end{lemma}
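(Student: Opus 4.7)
The plan is a direct averaging/partitioning argument on $\mstab(G)$. I would split the maximal stable sets into three classes: the large ones $\mathcal L$, the small ones $\mathcal S$, and the medium ones $\mathcal M := \mstab(G)\setminus(\mathcal L\cup\mathcal S)$. The target is to bound
\[
\sum_{A\in\mstab(G)}|A\cap L(G)|\leq \left(\tfrac{1}{2}+\delta\right)m\,|\mstab(G)|,
\]
which, once divided by $|\mstab(G)|$, gives exactly $\av\leq(\tfrac{1}{2}+\delta)m$.

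The key estimates, applied term-by-term, are: $|A\cap L(G)|\leq m$ for $A\in\mathcal L$ (the trivial bound), $|B\cap L(G)|\leq (1-\nu)\tfrac{m}{2}$ for $B\in\mathcal S$ (by definition of $\mathcal S$), and $|C\cap L(G)|\leq(\tfrac{1}{2}+\delta)m$ for $C\in\mathcal M$ (since $C\notin\mathcal L$). Substituting these into the sum reduces the desired inequality, after cancelling the contribution of $\mathcal M$, to a two-term comparison: the excess contributed by $\mathcal L$ relative to the threshold $(\tfrac{1}{2}+\delta)m$ is at most $(\tfrac{1}{2}-\delta)m|\mathcal L|$, while the savings contributed by $\mathcal S$ are at least $(\tfrac{\nu}{2}+\delta)m|\mathcal S|$.

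Finally, I would plug in the hypothesis $|\mathcal S|\geq \tfrac{1}{\nu}|\mathcal L|$: the savings then become at least $(\tfrac{1}{2}+\tfrac{\delta}{\nu})m|\mathcal L|$, which for any $\delta\geq 0$ and $\nu>0$ is at least the excess $(\tfrac{1}{2}-\delta)m|\mathcal L|$. Hence the excess is absorbed by the savings, and the bound on $\av$ follows.

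There is really no serious obstacle here; the statement is a pure counting inequality, and the only minor point worth checking is that the calibration works uniformly for all $\delta\geq 0$ (including the boundary case $\delta=0$, where the required identity degenerates to the tight inequality $|\mathcal L|\leq\nu|\mathcal S|$, matching the hypothesis exactly). This is why the constant $\tfrac{1}{\nu}$ in the hypothesis is the natural one.
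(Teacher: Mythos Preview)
Your proposal is correct and follows essentially the same approach as the paper: partition $\mstab(G)$ into $\mathcal L$, $\mathcal S$, and $\mathcal M=\mstab(G)\setminus(\mathcal L\cup\mathcal S)$, apply the obvious per-class upper bounds $m$, $(1-\nu)\tfrac{m}{2}$, and $(\tfrac{1}{2}+\delta)m$ respectively, and then use $|\mathcal S|\geq\tfrac{1}{\nu}|\mathcal L|$ to absorb the excess from $\mathcal L$ with the savings from $\mathcal S$. The paper normalises by $(\tfrac{1}{2}+\delta)m$ first and rounds the $\mathcal L$ and $\mathcal S$ ratios to $2$ and $1-\nu$, whereas you keep the $\delta$-dependence explicit until the final comparison, but this is only a cosmetic rearrangement of the same computation.
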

\begin{proof}
Let $\mathcal M=\mstab(G)\sm (\mathcal L\cup\mathcal S)$, that is, 
$\mathcal M$ is the set of those maximal stable sets $A$ 
with $(1-\nu)\tfrac{m}{2}< |A\cap L(G)|<(\tfrac{1}{2}+\delta)m$.
Then 
\begin{align*}
\sum_{A\in\mstab(G)}\frac{|A\cap L(G)|}{\left(\frac{1}{2}+\delta\right)m}
& \leq \sum_{A\in\mathcal L} \frac{m}{\left(\frac{1}{2}+\delta\right)m}+
\sum_{B\in\mathcal M} \frac{(\frac{1}{2}+\delta)m}{\left(\frac{1}{2}+\delta\right)m}+
\sum_{C\in\mathcal S} \frac{(1-\nu)\frac{m}{2}}{\left(\frac{1}{2}+\delta\right)m}\\
& \leq 2|\mathcal L|+|\mathcal M|+(1-\nu)|\mathcal S| 
= |\mstab(G)| - (\nu|\mathcal S|-|\mathcal L|).
\end{align*}
Thus, from $|\mathcal S|\geq \tfrac{1}{\nu}|\mathcal L|$
it follows that $\sum_{A\in\mstab(G)}\frac{|A\cap L(G)|}{\left(\frac{1}{2}+\delta\right)m}\leq |\mstab(G)|$, 
which is equivalent to the inequality of the lemma.
\end{proof}

\section{Proof of Theorem~\ref{realmainthm}}

In order to prove Theorem~\ref{realmainthm}, we distinguish several cases, depending on the 
relative sizes, $m$ and $n$, of the two sides of the random bipartite graph $G\in\rbip(m,n;p)$.
In each of the cases we need a different method.

The general strategy follows Lemma~\ref{largeandsmall}: We bound the number 
of maximal stable sets with large left side, usually counted by a random variable $\lmss$, 
and at the same time we show that there are many maximal stable sets with a small left side;
those we count with $\smallmss$. 

Up to $n<q^{-\frac{m}{2}}$ we are able to use the same bound for 
the number $\lmss$ of maximal stable sets whose left sides are of size at least $\tfrac{m}{2}$:
We prove that with high probability $\lmss$ is bounded by a polynomial in $n$. 
For right sides that are much larger than the left side, i.e.\ $m\gg n$,
we even extend such a bound to 
maximal stable sets with left side $\geq\tfrac{m}{3}$.

For the maximal stable sets with small left side, counted by $\smallmss$, we need to distinguish 
several cases. When the left side of the graph is much larger than the right side, 
namely $m \ge q^{-\sqrt[5]{n}}$, we find with high probability a large induced matching in $G$. 
This in turn implies that the total number of maximal stable sets is high, and thus
clearly also the number of those with small left side.

When the sides of the graph do not differ too much in size, 
$m \le q^{-\sqrt[5]{n}}$ and $n \le q^{-\sqrt[5]{m}}$,
the variance of the number of maximal stable sets with small left side is moderate enough 
to apply Chebychev's inequality. Since the expectation of $\smallmss$ 
is high, we again can use Lemma~\ref{largeandsmall} to deduce Theorem~\ref{realmainthm}.

However, when the left side of the graph becomes much larger than the right side, 
we cannot control the variance of $\smallmss$ anymore. Instead, 
for $q^{-\sqrt[5]{m}} \leq n \leq q^{-\frac{m}{16}}$,
we cut the right side into many pieces each of large size and apply Hoeffding's inequality
to each of the pieces together with the left side. The inequality ensures that 
we find on at least one of the pieces a large number of maximal stable sets 
of small left side.
Surpassing $n\geq q^{-\frac{m}{16}}$, we have to refine our estimations 
but we can still use this strategy up to slightly below $n=q^{-\frac{m}{2}}$. 

In the interval $q^{-\frac{m}{2}}\leq n\leq q^{-m^3}$, we encounter a serious obstacle.
There,
we have to cope with an average
that is very close to $\tfrac{m}{2}$. 
It is precisely for this reason that, overall, we only prove 
that $\av\leq\left(\tfrac{1}{2}+\delta\right)m$ instead of $\av\leq\tfrac{m}{2}$.
To keep below the slightly higher average, we only need to bound 
the number of maximal stable sets with left side $> (\tfrac{1}{2}+\delta)m$. 
This number we will almost trivally bound by $2^{\lambda m}$, with some $\lambda<1$. 
On the other hand,
we will see that 
the number $\smallmss$ of maximal stable sets of small left side is $2^{\lambda' m}$
with a $\lambda'$ as close to~$1$ as we want.

In the remaining case, we are dealing with an enormous right side: $n\geq q^{-m^3}$. 
Then, it is easy to see that with high probability there is an induced matching 
that covers all of the left side, which implies that every subset of $L(G)$
is the left side of a maximal stable set. 
This immediately gives us $\av =\tfrac{m}{2}$.

\subsection{The case $m \ge q^{-\sqrt[5]{n}}$}\label{sect.largeleft}

In this section we treat the graphs whose left side 
is much larger than the right side. From an easy argument it 
follows that, with high probability, any large enough random graph $G\in \rbip(m,n;p)$
contains an induced matching of size $s\log_2(n)$, for any constant $s$. 
This directly implies that the total number  of maximal stable sets
is large. 
At the same time, we shall bound the number of maximal stable sets of large 
left side, which then shows that there are many of small left side. 

However, if we take \emph{large} left side to mean at least $\tfrac{m}{2}$ then it might
be that most of those of \emph{small} left side have a left side whose 
size is very close to $\tfrac{m}{2}$. Such a left side does not help 
much to drop the average. Therefore, we will consider a more generous 
notion of a large left side and bound the number of maximal 
stable sets that have a left side of $\geq \tfrac{m}{3}$; then small will 
mean $<\tfrac{m}{3}$. 


\medskip
For a random graph $G\in\rbip(m,n;p)$,
let $\stab(\geq \ell;\geq r)$
denote the number of stable sets of  
that have at least $\ell$ vertices in $L(G)$ and at least $r$ vertices in~$R(G)$.

\begin{lemma}\label{genupper}
Let  $\ell^*\leq m$ and $r^*\leq n$ 
so that 
\(
nq^{\ell^*} \leq \frac{1}{2}.
\) 
Then for $G\in\rbip(m,n;p)$
\[
\exp[\stab(\geq \ell^*;\geq r^*)]\leq 
2^{m+1} \left(nq^{\ell^*}\right)^{r^*}.
\]
\end{lemma}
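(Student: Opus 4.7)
The plan is to bound the expectation by linearity and then perform a careful double summation over the possible sizes of the left and right parts of a stable set. For a fixed $S_L \subseteq L(G)$ with $|S_L|=\ell$ and $S_R \subseteq R(G)$ with $|S_R|=r$, the union $S_L \cup S_R$ is stable precisely when none of the $\ell r$ potential edges between $S_L$ and $S_R$ is present, which happens with probability $q^{\ell r}$. Summing over all choices, linearity of expectation gives
\[
\exp[\stab(\geq \ell^*;\geq r^*)] = \sum_{\ell \geq \ell^*}\sum_{r\geq r^*} \binom{m}{\ell}\binom{n}{r} q^{\ell r}.
\]

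Next I would split the two sums. For the inner sum on $\ell$, since $q<1$ and $r\geq r^*\geq 0$, we have $q^{\ell r}\leq q^{\ell^* r}$ for every $\ell\geq \ell^*$, so $\sum_{\ell\geq \ell^*}\binom{m}{\ell}q^{\ell r}\leq q^{\ell^* r}\sum_{\ell\geq \ell^*}\binom{m}{\ell}\leq 2^{m}q^{\ell^* r}$. Substituting back and using the crude bound $\binom{n}{r}\leq n^r$ yields
\[
\exp[\stab(\geq \ell^*;\geq r^*)]\leq 2^m\sum_{r\geq r^*}\left(nq^{\ell^*}\right)^{r}.
\]

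Finally, the hypothesis $nq^{\ell^*}\leq \tfrac12$ makes the remaining sum a geometric series with ratio at most $\tfrac12$, so $\sum_{r\geq r^*}(nq^{\ell^*})^r\leq 2(nq^{\ell^*})^{r^*}$. Combining the two constants of $2$ gives the claimed bound $2^{m+1}(nq^{\ell^*})^{r^*}$.

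There is no real obstacle here; the only subtle point is that we need both directions of monotonicity (of $q^{\ell r}$ in $\ell$, to factor out $q^{\ell^* r}$, and the geometric-series estimate in $r$, which requires precisely the hypothesis $nq^{\ell^*}\leq \tfrac12$). The proof is essentially a union bound where the binomial coefficient in $\ell$ is absorbed into the $2^m$ factor while the contribution from larger right sides is controlled by the geometric tail.
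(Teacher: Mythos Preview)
Your proof is correct and follows essentially the same approach as the paper's: both start from the exact expectation $\sum_{\ell\ge\ell^*}\sum_{r\ge r^*}\binom{m}{\ell}\binom{n}{r}q^{\ell r}$, replace $q^{\ell r}$ by $q^{\ell^* r}$, absorb $\sum_{\ell}\binom{m}{\ell}$ into $2^m$, bound $\binom{n}{r}\le n^r$, and finish with the geometric tail $\sum_{r\ge r^*}(nq^{\ell^*})^r\le 2(nq^{\ell^*})^{r^*}$.
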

\begin{proof}
The expectation is given by
\begin{align*}
\exp[\stab(\geq \ell^*;\geq r^*)] &=
\sum_{\ell=\ell^*}^m\sum_{r=r^*}^n {m\choose \ell}{n\choose r} q^{\ell r}\\
&\leq
\left(\sum_{\ell=\ell^*}^m{m\choose \ell}\right)
\left(\sum_{r=r^*}^n {n\choose r} q^{\ell^* r}\right)\\
&\leq 2^m \left(
\sum_{r=r^*}^{n}  \left(nq^{\ell^*}\right)^r
\right) \leq 
2^m \left(
\sum_{r=r^*}^{\infty}  \left(nq^{\ell^*}\right)^r
\right).
\end{align*}

The error estimation for the geometric series
yields $\sum_{i=k}^\infty z^i\leq 2|z|^k$
for any $|z|\leq\tfrac{1}{2}$. Applying this for $z=nq^{\ell^*}$, 
we obtain the claimed bound of the lemma.
\end{proof}

In the following, we denote by $\lmssthird$ 
the number of 
maximal stable sets $S$ of a bipartite graph $G$ with $|S \cap L(G)| \geq \tfrac{m}{3}$.

\begin{lemma}\label{largeleftupper}
Let $r^*=\lceil 3\log_{1/q}(2)\rceil+1.$ Then 
for any $\epsilon>0$ there is an $N$ so that for $G\in\rbip(m,n;p)$
\[
\pr[\lmssthird \le n^{r^*}]\ge 1-\epsilon 
\]
for all $m,n$ with $m\geq q^{-\sqrt[5]{n}}$ and $m+n\geq N$. 
\end{lemma}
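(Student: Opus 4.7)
The plan hinges on two complementary observations. First, in a bipartite graph a maximal stable set $S$ is entirely determined by $S\cap R(G)$: since $S$ is both stable and maximal, $S\cap L(G)$ must coincide with the set of all vertices of $L(G)$ having no neighbour in $S\cap R(G)$. Second, in our regime it will happen with high probability that no stable set combines $|S\cap L(G)|\geq m/3$ with $|S\cap R(G)|\geq r^*$. If both occur, then every maximal stable set $S$ counted by $\lmssthird$ satisfies $|S\cap R(G)|\leq r^*-1$ and is therefore determined by some subset of $R(G)$ of size at most $r^*-1$, giving
\[
\lmssthird\leq\sum_{r=0}^{r^*-1}\binom{n}{r}\leq n^{r^*}
\]
for $n$ sufficiently large. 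So it suffices to control the bad event.

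I would establish the second observation via Markov's inequality applied to $\stab(\geq\lceil m/3\rceil;\geq r^*)$, using Lemma~\ref{genupper} with $\ell^*=\lceil m/3\rceil$. Its hypothesis $nq^{\ell^*}\leq\tfrac12$ holds for large enough $N$: when $n\to\infty$, the assumption $m\geq q^{-\sqrt[5]{n}}$ forces $m$ to grow super-exponentially in $n$, so $q^{m/3}$ is minuscule; when $n$ stays bounded, $m+n\geq N$ forces $m\to\infty$, so again $nq^{m/3}\to 0$. The lemma then yields
\[
\exp[\stab(\geq\lceil m/3\rceil;\geq r^*)]\leq 2^{m+1}(nq^{m/3})^{r^*}=2^{m+1}n^{r^*}q^{mr^*/3}.
\]

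The main obstacle, and the explanation for the precise definition of $r^*$, is balancing the $2^{m+1}$ factor against $q^{mr^*/3}$. The choice $r^*=\lceil 3\log_{1/q}(2)\rceil+1$ guarantees $r^*/3\geq \log_{1/q}(2)+\tfrac13$, whence $q^{r^*/3}\leq\tfrac12 q^{1/3}$ and consequently $2^{m+1}q^{mr^*/3}\leq 2q^{m/3}$. The expected count is therefore bounded by $2n^{r^*}q^{m/3}$, and the hypothesis $m\geq q^{-\sqrt[5]{n}}$ makes $q^{m/3}$ shrink much faster than $n^{r^*}$ can grow in either of the two regimes discussed above, so for sufficiently large $N$ this expectation is below $\epsilon$. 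Markov's inequality combined with the uniqueness observation then closes the argument. The subtle point is that a slightly smaller $r^*$ would merely cancel the $2^{m+1}$ factor without leaving any decay in $q^{m/3}$ to beat the polynomial $n^{r^*}$; the extra $+1$ in the definition of $r^*$ is therefore essential and is precisely what makes the argument work.
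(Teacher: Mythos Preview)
Your proposal is correct and follows essentially the same route as the paper: apply Lemma~\ref{genupper} with $\ell^*=m/3$ to bound $\exp[\stab(\geq m/3;\geq r^*)]$ by $2n^{r^*}q^{m/3}$, invoke Markov to make the event $\{\stab(\geq m/3;\geq r^*)\geq 1\}$ have probability at most~$\epsilon$, and then observe that any maximal stable set counted by $\lmssthird$ with right side of size $<r^*$ is determined by that right side, giving at most $n^{r^*}$ such sets. Your explicit justification that a maximal stable set is determined by its intersection with $R(G)$, and your remark on why the $+1$ in $r^*$ is needed to leave a residual $q^{m/3}$ decay, are both accurate elaborations of steps the paper leaves implicit or states without comment.
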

\begin{proof}
Throughout the proof we assume that $m\geq q^{-\sqrt[5]{n}}$.

Setting $\ell^*=\tfrac{m}{3}$, we get from Lemma~\ref{genupper}
that 
\begin{align*}
\exp[\stab(\geq\tfrac{m}{3};\geq r^*)] &\leq 
2^{m+1}\left(nq^{\frac{m}{3}}\right)^{r^*}\\
&= 2n^{r^*}\cdot q^{m(\frac{r^*}{3}-\log_{1/q}(2))}
\leq 2n^{r^*} \cdot q^{\frac{m}{3}},
\end{align*}
by choice of $r^*$. 

Choose $N$ so that 
$2n^{r^*} \cdot q^{\frac{m}{3}}\leq \epsilon$ for all $m,n$
with $m+n \ge N$. 
Then, from Markov's inequality it follows that 
\begin{equation}\label{mthirds}
\pr[\stab(\geq\tfrac{m}{3};\geq r^*)> 0]\leq \epsilon.
\end{equation}

The set of maximal stable sets $S$ of $G$ whose left side $S\cap L(G)$
has size at least $\tfrac{m}{3}$ is divided into those $S$ with $|S\cap R(G)|\geq r^*$
and those whose right sides have $<r^*$ vertices; let
the number of the latter ones be $t$. 
Since there are at most $n^{r^*}$ subsets of $R(G)$ with at most $r^*$ vertices, $t\le n^{r^*}$.
Hence,  
\begin{align*}
\lmssthird &\leq \stab(\geq\tfrac{m}{3};\geq r^*) + t \\
&\leq \stab(\geq\tfrac{m}{3};\geq r^*) + n^{r^*}.
\end{align*}
From~\eqref{mthirds}, we deduce 
$\pr[\lmssthird>n^{r^*}]\leq\epsilon $.
\end{proof}

\begin{lemma}\label{indmatchings}
Let $s$ be a positive integer, and let $\epsilon>0$. 
Then there is an $N$ so that for $G\in\rbip(m,n;p)$
\[
\pr\left[\text{\rm $G$ has an induced matching of size
$\geq s\log_2(n)$}\right]\geq 1-\epsilon
\]
for all $m,n$ with $m+n\geq N$, $n\geq s\log_2(n)$ and $m\geq q^{-\sqrt[5]{n}}$.
\end{lemma}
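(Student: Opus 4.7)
The plan is to construct an induced matching of size $k:=\lceil s\log_2(n)\rceil$ by first fixing any $k$ right vertices $v_1,\dots,v_k\in R(G)$ (which is possible since the hypothesis $n\geq s\log_2(n)$ guarantees $k\leq n$) and then locating, for each index $i$, a left vertex $u_i$ whose neighborhood inside $\{v_1,\dots,v_k\}$ is exactly $\{v_i\}$. If such $u_1,\dots,u_k$ exist, they are automatically distinct (their neighborhoods in the fixed right set differ) and $\{u_1v_1,\dots,u_kv_k\}$ is an induced matching of the required size.

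Fix $\{v_1,\dots,v_k\}$ and let $X_i$ count the vertices $u\in L(G)$ with $N(u)\cap\{v_1,\dots,v_k\}=\{v_i\}$. Each left vertex independently satisfies this with probability $pq^{k-1}$, so $X_i$ is binomial with mean $mpq^{k-1}$. A union bound yields
\[
\pr\bigl[\exists\, i\colon X_i=0\bigr]\;\leq\; k(1-pq^{k-1})^m \;\leq\; k\,e^{-mpq^{k-1}}.
\]
The whole task therefore reduces to showing that $mpq^{k-1}\to\infty$ as $m+n\to\infty$ under the hypotheses of the lemma. Since $k-1\leq s\log_2(n)$, we have $q^{k-1}\geq q^{s\log_2(n)}=n^{-s\log_2(1/q)}$, so
\[
mpq^{k-1}\;\geq\; p\,m\,n^{-s\log_2(1/q)}.
\]

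The remaining step is a brief case distinction. Fix any constant $C$. If $n\leq C$, then $n^{-s\log_2(1/q)}$ is bounded below by a positive constant, while $m+n\geq N$ forces $m\geq N-C$, so the right-hand side above grows with $N$. If $n>C$, the hypothesis $m\geq q^{-\sqrt[5]{n}}$ gives $mpq^{k-1}\geq p\,q^{-\sqrt[5]{n}}n^{-s\log_2(1/q)}$, and because $\sqrt[5]{n}$ grows faster than any fixed power of $\log_2(n)$, this expression tends to infinity with $n$. In either regime, choosing $N$ sufficiently large makes $k\,e^{-mpq^{k-1}}<\epsilon$, which completes the proof.

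The only mild obstacle is handling the two regimes $n$ bounded versus $n$ large uniformly; no concentration machinery beyond the union bound is needed, because the expected number of left vertices realising any prescribed neighborhood on $\{v_1,\dots,v_k\}$ is already enormous under the assumption $m\geq q^{-\sqrt[5]{n}}$.
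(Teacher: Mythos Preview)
Your proof is correct and takes a genuinely different route from the paper's. The paper fixes a single $k$-set $R'\subseteq R(G)$, partitions $L(G)$ into $\lfloor m/k\rfloor$ disjoint blocks $L_1,\dots,L_{\lfloor m/k\rfloor}$ of size $k$, and for each block considers the event that $L_i\cup R'$ induces a perfect matching; these events are independent across blocks, and the probability that all of them fail is at most $e^{-p^kq^{k^2}\lfloor m/k\rfloor}$. You instead fix the same $R'=\{v_1,\dots,v_k\}$ but search all of $L(G)$ at once, asking for each $i$ whether some left vertex has neighbourhood exactly $\{v_i\}$ inside $R'$, and finish with a union bound over the $k$ indices. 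Your approach avoids the block partition entirely and yields the simpler exponent $mpq^{k-1}$ (linear in $k$) in place of the paper's $p^kq^{k^2}\lfloor m/k\rfloor$ (with $k^2$ in the exponent), which makes the asymptotic check $mpq^{k-1}\to\infty$ marginally easier. The paper's partition buys genuine independence of the trials, whereas your events $\{X_i=0\}$ are correlated, but since you only apply a union bound this costs nothing. One small expository point: ``fix any constant $C$'' should really read ``choose $C$ large enough that the $n>C$ regime already gives $ke^{-mpq^{k-1}}<\epsilon$, then choose $N$ for the $n\le C$ regime''; the argument as written is correct once this order of quantifiers is made explicit.
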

\begin{proof}
Let us assume throughout the proof that $n\geq s\log_2(n)$ and $m\geq q^{-\sqrt[5]{n}}$.

Put $k:=\lceil s\log_2(n)\rceil$, and choose
$\lfloor m/k\rfloor$ pairwise disjoint subsets
$L_1,\ldots,L_{\lfloor m/k\rfloor}$
of size $k$ of $L(G)$.
Since $n\geq s\log_2(n)$, $n\geq k$ and so we may choose a set $R' \subseteq R(G)$ with $|R'|=k$.
For $i=1,\ldots,\lfloor m/k\rfloor$ let $M_i$
be the random indicator variable for an induced matching of size $k$ on $L_i\cup R'$.
It is straightforward that
\[
\pr\left[M_i=1\right]=k!p^kq^{k^2-k}\geq p^kq^{k^2}.
\] 
Since the $M_i$ are independent, 
\[
\pr\left[\sum_{i=1}^{\lfloor m/k\rfloor}M_i=0\right]\leq \left(1-p^kq^{k^2} \right)^{\lfloor m/k\rfloor}
\leq e^{-p^kq^{k^2}{\lfloor m/k\rfloor}},
\]
using the standard inequality $1-x \le e^x$ for all $x<1$.
Now for large $m+n$ the dominating term in
\(
p^kq^{k^2}{\lfloor m/k\rfloor}
\)
is $q^{k^2}m$, since $m\geq q^{-\sqrt[5]{n}}$, which becomes arbitrarily large for large $m+n$
as $k = \lceil s\log_2(n)\rceil$ and $n \leq \left(\log_{1/q}(m)\right)^5$. 
Thus, there is an $N$ so that 
$\pr\left[\sum_{i=1}^{\lfloor m/k\rfloor}M_i=0\right]\leq\epsilon$
for all $m,n$ with $m+n\geq N$.
\end{proof}

We have now bounded the number $\lmssthird$ of maximal stable sets 
of large left side,  while the previous lemma will let us 
to conclude that the number of those with small left side is large. 
Together this allows us prove the first case of Theorem~\ref{realmainthm}:

\begin{lemma}\label{largeleftside}
For every $\epsilon>0$ there exists an $N$ so that 
for $G\in\rbip(m,n;p)$ 
\[
\pr\left[\av\leq\tfrac{m}{2}\right]\geq 1-\epsilon,
\]
for all $m,n$ with $m+n\geq N$, $n\geq\max\{20, (\lceil 3\log_{1/q}(2)\rceil+2)^2\}$ 
and $m\geq q^{-\sqrt[5]{n}}$.
\end{lemma}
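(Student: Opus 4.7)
The plan is to apply Lemma~\ref{largeandsmall} with $\delta=0$ and $\nu=\tfrac{1}{3}$. With these parameters, $\mathcal L$ is the family of maximal stable sets $A$ with $|A\cap L(G)|\geq\tfrac{m}{2}$, and $\mathcal S$ is the family with $|B\cap L(G)|\leq\tfrac{m}{3}$. To conclude $\av\leq\tfrac{m}{2}$ it suffices to show that $|\mathcal S|\geq 3|\mathcal L|$ with probability at least $1-\epsilon$. The two required bounds come from results already proved in this section: an upper bound on large-left-side maximal stable sets from Lemma~\ref{largeleftupper}, and a lower bound on the total number of maximal stable sets obtained from an induced matching supplied by Lemma~\ref{indmatchings}.

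Set $r^*=\lceil 3\log_{1/q}(2)\rceil+1$, and choose $N$ large enough to invoke both Lemma~\ref{largeleftupper} and Lemma~\ref{indmatchings} (the latter applied with $s=r^*+1$), each with error at most $\epsilon/2$. With probability at least $1-\epsilon/2$ we have $\lmssthird\leq n^{r^*}$; since both $\mathcal L$ and the complement of $\mathcal S$ in $\mstab(G)$ consist of maximal stable sets with left-side size at least $\tfrac{m}{3}$, this yields $|\mathcal L|\leq n^{r^*}$ and $|\mathcal S|\geq |\mstab(G)|-n^{r^*}$. Also with probability at least $1-\epsilon/2$ the graph contains an induced matching of size $k\geq (r^*+1)\log_2(n)$, with edges $u_1v_1,\dots,u_kv_k$. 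For every $T\subseteq\{1,\dots,k\}$ the set $\{u_i:i\in T\}\cup\{v_i:i\notin T\}$ is stable and may be greedily extended to a maximal stable set $M_T$; the assignment $T\mapsto M_T$ is injective because $M_T$ cannot contain both endpoints of a matching edge yet must contain the prescribed one. Hence $|\mstab(G)|\geq 2^k\geq n^{r^*+1}$.

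Combining the two events via the union bound, with probability at least $1-\epsilon$ we obtain
\[
|\mathcal S|\;\geq\;n^{r^*+1}-n^{r^*}\;=\;n^{r^*}(n-1)\;\geq\;3\,n^{r^*}\;\geq\;3|\mathcal L|,
\]
where $n-1\geq 3$ uses $n\geq 20$ from the hypothesis. Lemma~\ref{largeandsmall} then delivers $\av\leq\tfrac{m}{2}$, as desired.

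The only delicate step is verifying the hypothesis $n\geq s\log_2(n)$ of Lemma~\ref{indmatchings} with $s=r^*+1$. This is exactly where the precise lower bound on $n$ in the hypothesis of Lemma~\ref{largeleftside} is needed: from $n\geq(r^*+1)^2$ one gets $\sqrt n\geq r^*+1$, and $\sqrt n\geq\log_2(n)$ holds for $n\geq 17$ and in particular for $n\geq 20$, so that $n\geq(r^*+1)\sqrt n\geq(r^*+1)\log_2(n)$. Once this technical matching between the polynomial exponent $r^*$ supplied by Lemma~\ref{largeleftupper} and the exponent $r^*+1$ produced by the induced matching is in place, the rest of the argument is a short calculation.
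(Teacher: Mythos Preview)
Your proof is correct and follows essentially the same approach as the paper's own proof: both combine Lemma~\ref{largeleftupper} (with $r^*=\lceil 3\log_{1/q}(2)\rceil+1$) and Lemma~\ref{indmatchings} (with $s=r^*+1$), obtain $|\mstab(G)|\geq n^{r^*+1}$ from the induced matching, deduce at least $n^{r^*}(n-1)\geq 3n^{r^*}$ maximal stable sets with left side $\leq\tfrac{m}{3}$, and finish via Lemma~\ref{largeandsmall} with $\delta=0$ and $\nu=\tfrac13$. Your write-up is slightly more explicit in two places---making $\nu=\tfrac13$ explicit and spelling out why $n\geq\max\{20,(r^*+1)^2\}$ implies $n\geq(r^*+1)\log_2(n)$---but the argument is the same.
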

\begin{proof}
Set $r^*=\lceil 3\log_{1/q}(2)\rceil+1$.
Choose $N$ to be the maximum 
of the $N$ obtained from Lemma~\ref{largeleftupper} for $\tfrac{\epsilon}{2}$
and the one from Lemma~\ref{indmatchings} applied
to $s=r^*+1=\lceil 3\log_{1/q}(2)\rceil+2$ and $\tfrac{\epsilon}{2}$.

Now, consider $m,n$ with $m+n\geq N$, $n\geq\max\{20, (\lceil 3\log_{1/q}(2)\rceil+2)^2\}$ 
and $m\geq q^{-\sqrt[5]{n}}$.
We note that $n\geq\max\{20, (\lceil 3\log_{1/q}(2)\rceil+2)^2\}$
implies that $n\geq s\log_2(n)$.
By choice of $s$ and $N$, we obtain from 
Lemma~\ref{indmatchings} that 
the probability that $G\in\rbip(m,n;p)$
does not contain an induced matching of size at least $s\log_2(n)$ 
is at most $\tfrac{\epsilon}{2}$. On the other hand, the probability 
that the number $\lmssthird$ 
of maximal stable sets $A$ with $|A\cap L(G)|\geq \tfrac{m}{3}$
surpasses $n^{r^*}$ is as well $\leq\tfrac{\epsilon}{2}$. 
Thus, the probability that none of these two events occur
is at least $1-\epsilon$. 
We claim that in this case 
$\av\leq\tfrac{m}{2}$.

So, assume that $G$ contains an induced matching of 
cardinality $\geq s\log_2(n)$ and that $\lmssthird \leq n^{r^*}$.
There are at least $2^{s\log_2(n)}$  maximal stable sets
on the subgraph restricted to the matching edges. Since
each extends to a distinct maximal stable set of $G$, 
the number of maximal stable sets of $G$ is at least
$2^{s\log_2(n)}=n^s=n^{r^*+1}$. 
On the other hand, 
from $\lmssthird \leq n^{r^*}$ it follows 
that at least $n^{r^*}(n-1)$ of the maximal stable sets
have a left side of size at most $\tfrac{m}{3}$. 
As $n-1\geq 3$, we may apply Lemma~\ref{largeandsmall}
with $\delta=0$
in order to see that $\av\leq\tfrac{m}{2}$.
\end{proof}

The key observation in the argument above is that the number of 
maximal stable sets with a large left side is bounded by a polynomial 
in $n$, the size of the right-hand side. We will continue to 
exploit this, in a slightly strengthened version, below. The second part 
of the argument here is to note that there is always a relatively
large induced matching, from which we deduce that the total number
of maximal stable sets is not too small. Then we also have a large 
number of maximal stable sets with small left side, so that we
are guaranteed a small average. 

This strategy fails once $m$ becomes smaller than $n$. 
Assume $m<n$ and, for simplicity,  $p=q=\tfrac{1}{2}$. Below 
we will in that case bound the number of maximal stable sets 
with large left side by about $2n^2$. Thus, for our strategy to 
work, we should better find an induced matching of size 
at least $2\log_2(n)$. An easy calculation, however, shows 
that the expected number 
of induced matchings of size $2\log_2(n)$ is below one.

\subsection{The case $n \le q^{-\sqrt[5]{m}}$ and $m \le q^{-\sqrt[5]{n}}$}\label{sect.5th.sqrt.m}

From now on we will denote by $\lmsshalf$ the number of maximal stable sets $S$
with $|S \cap L(G)| \geq \tfrac{m}{2}$ of a 
random bipartite graph $G\in\rbip(m,n;p)$.
We first bound $\lmsshalf$ by a polynomial in $n$, a bound that will 
be useful up to slightly below $n=2^{\frac{m}{2}}$.

\begin{lemma}\label{squpperbound}
For every $\alpha<\tfrac{1}{2}$ and every $\epsilon>0$ there exists an $N$ so that 
for $G\in\rbip(m,n;p)$
\[
\pr[\lmsshalf \le 2n^{\log_{q}(1/4)}] \geq 1-\epsilon
\]
when $m+n\geq N$ and $n\leq q^{-\alpha m}$.
\end{lemma}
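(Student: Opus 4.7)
The strategy is to split the maximal stable sets counted by $\lmsshalf$ into those with a small right side and those with a large right side. Set $r^* = \lfloor \log_q(1/4)\rfloor + 1$, which is the smallest integer strictly greater than $\log_q(1/4)$. I would bound the contributions of $|S\cap R(G)|<r^*$ and $|S\cap R(G)|\geq r^*$ separately.

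For the first contribution, observe that every maximal stable set $S$ in a bipartite graph is determined by $S_R:=S\cap R(G)$: indeed, $S\cap L(G) = L(G)\sm N(S_R)$. Hence the number of maximal stable sets with $|S_R|<r^*$ is at most $\sum_{r<r^*}\binom{n}{r}$, a polynomial in $n$ of degree $r^*-1$. Since $r^*-1\leq \log_q(1/4)$ (with equality exactly when $\log_q(1/4)$ is an integer), the standard bound $\binom{n}{k}\leq n^k/k!$ shows that this count is at most $c_1\,n^{\log_q(1/4)}$ for some constant $c_1<2$ depending only on $q$, once $n$ is sufficiently large.

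For the second contribution, the count is at most $\stab(\geq m/2;\geq r^*)$. The hypothesis $nq^{m/2}\leq 1/2$ of Lemma~\ref{genupper} is satisfied for $m$ large enough, since $nq^{m/2}\leq q^{m\beta}$ where $\beta:=1/2-\alpha>0$. Lemma~\ref{genupper} therefore gives $\exp[\stab(\geq m/2;\geq r^*)]\leq 2^{m+1}(nq^{m/2})^{r^*}$. The key calculation divides this expectation by $n^{\log_q(1/4)}$; substituting $n\leq q^{-\alpha m}$, using the identity $q^{\log_q(1/4)}=1/4$, and simplifying gives
\[
\frac{\exp[\stab(\geq m/2;\geq r^*)]}{n^{\log_q(1/4)}}\leq 2\bigl(4q^{r^*}\bigr)^{m\beta}.
\]
By choice of $r^*$ we have $q^{r^*}<q^{\log_q(1/4)}=1/4$, so $4q^{r^*}<1$, and the right-hand side tends to $0$ as $m\to\infty$. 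Markov's inequality then yields $\pr\bigl[\stab(\geq m/2;\geq r^*)>(2-c_1)n^{\log_q(1/4)}\bigr]\leq\epsilon$ for all $m+n$ sufficiently large.

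Combining the two estimates yields $\lmsshalf\leq 2\,n^{\log_q(1/4)}$ with probability at least $1-\epsilon$. The hardest step is the algebraic simplification showing that the ratio $\exp[\stab(\geq m/2;\geq r^*)]/n^{\log_q(1/4)}$ reduces uniformly (for every $n\leq q^{-\alpha m}$) to $2(4q^{r^*})^{m\beta}$; once this identity is in hand, the fact that $r^*$ was chosen to make $4q^{r^*}<1$ together with $\beta>0$ does the work. A secondary difficulty is the handling of boundary cases in which $\log_q(1/4)$ is a small integer and the first contribution almost saturates the budget $2\,n^{\log_q(1/4)}$, forcing Markov's inequality to be applied with a sufficiently tight threshold.
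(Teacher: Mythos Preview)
Your proof is correct and follows essentially the same approach as the paper: the same threshold $r^*=\lfloor\log_q(1/4)\rfloor+1$, the same split into small and large right sides, the same application of Lemma~\ref{genupper} and Markov's inequality, and the same algebra (your $2(4q^{r^*})^{m\beta}$ is exactly the paper's $2q^{\nu m}$ with $\nu=\beta(r^*-\log_q(1/4))$). The only cosmetic difference is that you track the deterministic count more carefully via a constant $c_1<2$, whereas the paper simply asserts it is at most $n^{\log_q(1/4)}$ and uses Markov with threshold $n^{\log_q(1/4)}$.
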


\begin{proof}
Let $\alpha<\tfrac{1}{2}$ be given and assume $n\leq q^{-\alpha m}$.

We determine first the probability that a random bipartite graph contains many 
stable sets (not necessarily maximal) 
with left side $\geq\tfrac{m}{2}$ and right side $\geq \lfloor \log_q(1/4) \rfloor + 1$.

For this, note that  $\alpha < \tfrac{1}{2}$ implies 
 $nq^{\frac{m}{2}}\leq q^{m(\frac{1}{2}-\alpha)} \leq \tfrac{1}{2}$
for large $m$. 
Moreover, it follows that 
\[
\nu:=(1/2 - \alpha)(\lfloor \log_q(1/4)\rfloor + 1 - \log_q(1/4)) > 0.
\]
Thus, applying Lemma~\ref{genupper} yields 
\begin{align*}
\lefteqn{\exp[\stab(\geq \tfrac{m}{2};\geq \lfloor \log_q(1/4) \rfloor + 1)]}\quad \\
\quad &\le  2^{m+1} \left(n q^{\frac{m}{2}}\right)^{\lfloor \log_q(1/4) \rfloor + 1} \\
&=  2 n^{\log_q(1/4)} 2^m n^{\lfloor \log_q(1/4) \rfloor + 1 - \log_q(1/4)}q^{\frac{\lfloor \log_q(1/4) \rfloor + 1}{2} m} \\
&\leq  2n^{\log_q(1/4)} q^{- \log_q(1/2) m - \alpha (\lfloor \log_q(1/4) \rfloor + 1 - \log_q(1/4)) m + \frac{\lfloor \log_q(1/4) \rfloor + 1}{2} m} \\
&=  2n^{\log_q(1/4)} q^{m (1/2 - \alpha)(\lfloor \log_q(1/4)\rfloor + 1 - \log_q(1/4))}\\
&\le  2n^{\log_q(1/4)} q^{\nu m}
\end{align*}
for sufficiently large $m$.
With Markov's inequality we deduce
\[
\pr[\stab(\geq \tfrac{m}{2};\geq \lfloor \log_q(1/4) \rfloor + 1) > n^{\log_q(1/4)}]
\leq \frac{2 n^{\log_q(1/4)} q^{\nu m}}{n^{\log_q(1/4)}} = 2q^{\nu m},
\]
which tends to~$0$ as $m\to\infty$. Since $n\leq q^{-\alpha m}$ 
implies that also $m$ must be large for large $m+n$, we may find an $N$
so that 
\(
\pr[\stab(\geq \tfrac{m}{2};\geq \lfloor \log_q(1/4) \rfloor + 1) > n^{\log_q(1/4)}]\leq\epsilon,
\) 
for all for all integers $m,n$ with $m+n\geq N$. 

Considering such $m$ and $n$, we turn now to the number of maximal stable sets 
$\lmsshalf$ with left side $\geq\tfrac{m}{2}$.
As in Lemma~\ref{largeleftupper}, we argue that the maximal stable sets counted 
by $\lmsshalf$ split into those whose right side have at 
least $\lfloor \log_q(1/4) \rfloor + 1$ vertices and those with at most $\lfloor \log_q(1/4) \rfloor$
vertices in $R(G)$. Of the latter ones, there are at most $n^{\log_q(1/4)}$ many sets. 
By choice of $N$, the probability that we have more than $n^{\log_q(1/4)}$ of the former is
bounded by $\epsilon$.
\end{proof}

Let us quickly calculate the probability that a given set of vertices is a maximal stable set.

\begin{lemma}\label{mssproba}
For a random bipartite graph $G\in\rbip(m,n;p)$, let
$S$ be a subset of $V(G)$.
If $|S\cap L|=\ell$  and $|S\cap R|=r$ then 
\[
\pr[S\in\mstab]=
 q^{\ell r}
\left(1-q^r\right)^{m-\ell}
\left(1-q^{\ell}\right)^{n-r}.
\]
\end{lemma}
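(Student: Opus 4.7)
The plan is to decompose the event $\{S \in \mstab\}$ into a conjunction of three \emph{independent} sub-events indexed by three disjoint classes of potential edges, compute the probability of each, and multiply.

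First I would classify the potential edges of $G$, i.e.\ the pairs in $L\times R$, according to how they sit with respect to $S$. There are four classes: (a) edges between $S\cap L$ and $S\cap R$, of which there are $\ell r$ many; (b) edges between $L\sm S$ and $S\cap R$; (c) edges between $S\cap L$ and $R\sm S$; and (d) edges between $L\sm S$ and $R\sm S$. Since the edges of a random bipartite graph are present independently, any events depending on disjoint classes of edges are independent; and I would note that class (d) plays no role at all, since edges there do not affect either the stability of $S$ or its maximality.

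Next I would handle the three remaining classes. For (a): $S$ is stable iff each of the $\ell r$ potential edges inside $S$ is absent, which happens with probability $q^{\ell r}$. For (b): given stability, maximality on the left requires that every vertex $v\in L\sm S$ has at least one neighbour in $S\cap R$; since $v$ has no neighbour in $S\cap R$ with probability $q^r$, the probability that $v$ has at least one such neighbour is $1-q^r$, and independence across the $m-\ell$ vertices of $L\sm S$ gives $(1-q^r)^{m-\ell}$. For (c): the symmetric argument for vertices of $R\sm S$ yields $(1-q^\ell)^{n-r}$.

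Finally I would invoke the independence of classes (a), (b), (c) — they concern disjoint sets of potential edges — to multiply the three factors, producing the claimed formula $q^{\ell r}(1-q^r)^{m-\ell}(1-q^\ell)^{n-r}$. There is no real obstacle here; the only point that deserves a careful sentence is the observation that the three events really do depend on disjoint edge sets, so that independence can be applied.
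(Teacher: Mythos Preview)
Your proposal is correct and matches the paper's argument essentially line for line: the paper also identifies the three factors as the probability that $S$ is stable, that every vertex of $L(G)\sm S$ has a neighbour in $S\cap R(G)$, and that every vertex of $R(G)\sm S$ has a neighbour in $S\cap L(G)$. Your write-up is in fact slightly more careful than the paper's, since you make explicit the partition of potential edges into disjoint classes and the resulting independence, which the paper leaves implicit.
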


\begin{proof}
The factor $q^{\ell r}$ is the probability that there is no edge from $S \cap L(G)$ to $S \cap R(G)$, that is, $S$ is a stable set.
The factor $\left(1-q^r\right)^{m-\ell}$ is the probability that every of the $m-\ell$ many vertices in $L(G) \setminus S$ has a neighbour in $S \cap R(G)$, and $\left(1-q^\ell\right)^{n-r}$ is the probability that every of the $n-r$ many vertices in $R(G) \setminus S$ has a neighbour in $S \cap L(G)$.
The latter two conditions ensure that $S$ is a maximal stable set.
\end{proof}

Next, we calculate the expectation and the variance of the number 
of maximal stable sets of small left side. 
Since they outnumber the other maximal stable sets by far, we concentrate on those
maximal stable sets with a left side equal to $\approx\log_{1/q}(n)$ and a right side equal to $\approx\log_{1/q}(m)$.
This choice is somewhat forced by the  maximality requirement 
for maximal stable sets: For logarithmic sized left sides  the 
maximality requirement eliminates only a constant proportion of the possible 
maximal stable sets. With smaller sides, on the other hand, we lose 
more sets, that is, the expectation becomes much smaller. 

For $G \in \rbip(m,n;p)$ 
 we  denote by $\smallmss$ the number of 
maximal stable sets $S$ of $G$ 
with $|S \cap L(G)| = \Lside$ and $|S \cap R(G)| = \Rside$.

\begin{lemma}\label{exp.small.mss}
Let $c = e^{-(2/q+1)}$.
There are $m_0,n_0 \in \mathbb{N}$ such that for $G\in\rbip(m,n;p)$
\[
\exp[\smallmss] \geq c  { m \choose \Lside} \, \Rside^{-\Rside}
\]
for all $m\geq m_0$, $n\geq n_0$ with $m\geq\log_{1/q}(n)$ and $n\geq\log_{1/q}(m)$.
\end{lemma}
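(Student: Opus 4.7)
My plan is to evaluate $\exp[\smallmss]$ exactly by linearity of expectation and then lower-bound the resulting product. Setting $\ell := \Lside$ and $r := \Rside$, there are $\binom{m}{\ell}\binom{n}{r}$ subsets $S \subseteq V(G)$ with $|S\cap L(G)|=\ell$ and $|S\cap R(G)|=r$, and Lemma~\ref{mssproba} gives the probability that any such $S$ is a maximal stable set. Hence
\begin{equation*}
\exp[\smallmss] \;=\; \binom{m}{\ell}\binom{n}{r}\, q^{\ell r}\,(1-q^r)^{m-\ell}\,(1-q^\ell)^{n-r},
\end{equation*}
and, after pulling out $\binom{m}{\ell}$, the task reduces to showing that $\binom{n}{r}\, q^{\ell r}\,(1-q^r)^{m-\ell}\,(1-q^\ell)^{n-r} \geq c\, r^{-r}$.

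The easier half is to pair $\binom{n}{r}$ with $q^{\ell r}$. Since $\ell \leq \log_{1/q}(n)$ by definition, we have $q^\ell \geq 1/n$, so $q^{\ell r} \geq n^{-r}$. Combined with the elementary inequality $\binom{n}{r} \geq (n/r)^r$, which holds for $n \geq r$ because each factor in $\binom{n}{r}=\prod_{i=0}^{r-1}(n-i)/(r-i)$ is at least $n/r$, this gives $\binom{n}{r}\,q^{\ell r} \geq r^{-r}$, accounting exactly for the factor $r^{-r}$ demanded by the conclusion. What remains is to show $(1-q^r)^{m-\ell}(1-q^\ell)^{n-r} \geq e^{-(2/q+1)}$.

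This last bound is the main technical step, and the only place where the choice of $m_0, n_0$ matters. The key numerical observations are that $r = \lfloor\log_{1/q}(m)\rfloor$ forces $q^r < 1/(qm)$, whence $m q^r < 1/q$, and symmetrically $n q^\ell < 1/q$. Using $\ln(1-x) \geq -x/(1-x)$ for $x \in [0,1)$ yields
\begin{equation*}
(1-q^r)^{m-\ell} \;\geq\; e^{-(m-\ell)q^r/(1-q^r)} \;\geq\; e^{-(1/q)/(1-q^r)},
\end{equation*}
and because $q^r \to 0$ as $m \to \infty$, choosing $m_0$ large enough so that $1/(1-q^r) \leq 1 + q/2$ produces $(1-q^r)^{m-\ell} \geq e^{-(1/q + 1/2)}$. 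The symmetric argument, at the cost of choosing $n_0$ large enough so that $1/(1-q^\ell) \leq 1 + q/2$, gives the same bound for $(1-q^\ell)^{n-r}$; multiplying the two estimates delivers $e^{-(2/q+1)} = c$.

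The only subtlety is ensuring that the two $1/(1-x)$ corrections can simultaneously be absorbed into the constant slack $e^{-1}$ that $c$ allots on top of the limit value $e^{-2/q}$. Since $r$ and $\ell$ grow without bound as $m$ and $n$ tend to infinity independently, no coupling between $m_0$ and $n_0$ is required, and the argument is a direct linearity-of-expectation calculation tightened by elementary analytic estimates.
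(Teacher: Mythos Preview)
Your proof is correct and follows essentially the same route as the paper: linearity of expectation via Lemma~\ref{mssproba}, the pairing $\binom{n}{r}q^{\ell r}\geq r^{-r}$ from $q^{\ell}\geq 1/n$ and $\binom{n}{r}\geq (n/r)^r$, and then the bound $(1-q^r)^{m-\ell}(1-q^\ell)^{n-r}\geq e^{-(2/q+1)}$ for large $m,n$. The only cosmetic difference is that the paper obtains the last estimate by passing through $(1-\tfrac{1}{qm})^m\to e^{-1/q}$ directly, whereas you use the inequality $\ln(1-x)\geq -x/(1-x)$ together with $mq^r<1/q$; both arguments yield the same intermediate bound $e^{-(1/q+1/2)}$ for each factor.
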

\begin{proof}
Assume that $m,n$ are integers with $m\geq\log_{1/q}(n)$ and $n\geq\log_{1/q}(m)$.
We first note that 
\begin{align*}
\left( 1-q^{\Rside}\right)^{m-\Lside} &\geq \left( 1-q^{\Rside}\right)^{m}\\
&\geq \left( 1-q^{\log_{1/q}(m) - 1}\right)^{m}\\
&= \left(1-\frac{1}{qm}\right)^{m}.
\end{align*}
Since $\lim_{m \rightarrow \infty}\left(1-\frac{1}{qm}\right)^{m} = e^{-1/q}$, there is $m_0 \in \mathbb{N}$ such that 
\begin{equation}
\label{onebound}\left(1-\frac{1}{qm}\right)^{m} \ge e^{-\left(\tfrac{1}{q}+\tfrac{1}{2}\right)},
\end{equation}
for all $m \ge m_0$. With the same arguments, we see that there is an $n_0\in\mathbb N$, 
so that 
\[
\left( 1-q^{\Lside}\right)^{n-\Rside}\geq e^{-\left(\tfrac{1}{q}+\tfrac{1}{2}\right)}.
\]
for all $n \geq n_0$.

Now consider a random bipartite graph $G \in \rbip(m,n;p)$ with $m\geq m_0$
and $n\geq n_0$, and let $S$ be any vertex subset with $|S\cap L(G)|=\Lside=:a$ and 
$|S\cap R(G)|=\Rside=:b$.
By Lemma~\ref{mssproba}, the probability that $S$ 
is a maximal stable set of $G$ amounts to
\[
\pr[S\text{\rm\ is maximally stable}] = q^{ab}\left( 1-q^{b}\right)^{m-a}
\left( 1-q^{a}\right)^{n-b}.
\]
The first term is at least equal to $n^{-\Rside}$, while, by~\eqref{onebound},
the remaining terms together are at least equal to $c=e^{-(2/q+1)}$.
This yields 
\[
\pr[S \text{\rm\ is maximally stable}] \geq cn^{-\Rside}.
\]
Thus,
\begin{align*}
\exp[\smallmss] &\ge {m \choose \Lside} {n \choose \Rside} c n^{-\Rside} \\
&\ge {m \choose \Lside} 
\left( \frac{n}{\Rside} \right)^{\Rside} c n^{-\Rside} \\
&= c {m \choose \Lside}\, \Rside^{-\Rside}.
\end{align*}
\end{proof}

We use Chebyshev's inequality to show that, with high probability, $\smallmss$ 
does not differ much from the expected value.

\begin{lemma}\label{superpoly.lower.bound}
For every $\epsilon>0$  
there is an $N$ so that for $G\in\rbip(m,n;p)$  
\[
\pr\left[\smallmss > \tfrac{1}{2} \exp[\smallmss]\right] \geq 1-\epsilon,
\]
for all $m,n$ with $m+n\geq N$, $m\leq q^{-\sqrt[5]{n}}$ and 
$n\leq q^{-\sqrt[5]{m}}$.
\end{lemma}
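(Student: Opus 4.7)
The plan is the second moment method. Setting $X:=\smallmss$, we have $\{X\leq\tfrac12\exp[X]\}\subseteq\{|X-\exp[X]|\geq\tfrac12\exp[X]\}$, so Chebyshev's inequality~\eqref{cheb} gives
\[
\pr\!\left[X\leq\tfrac12\exp[X]\right]\leq\frac{4\,\mathrm{Var}[X]}{\exp[X]^2}.
\]
It thus suffices to show $\mathrm{Var}[X]=o(\exp[X]^2)$ uniformly as $m+n\to\infty$ within the prescribed range; choosing $N$ large enough then forces the ratio below $\epsilon$. The lower bound from Lemma~\ref{exp.small.mss} supplies a usable estimate for the denominator.

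Decompose $X=\sum_{S}X_S$ with $X_S:=\mathbf{1}[S\in\mstab(G)]$, where $S$ ranges over the vertex subsets with $|S\cap L(G)|=a:=\Lside$ and $|S\cap R(G)|=b:=\Rside$. Then $\exp[X^2]=\sum_{k=0}^{a}\sum_{\ell=0}^{b}\Sigma_{k\ell}$, where $\Sigma_{k\ell}$ collects $\pr[S,T\in\mstab]$ over pairs $(S,T)$ with $(|S\cap T\cap L|,|S\cap T\cap R|)=(k,\ell)$. For the ``disjoint'' term $\Sigma_{00}$ the four edge blocks $S\cap L\times S\cap R$, $T\cap L\times T\cap R$, $S\cap L\times T\cap R$ and $T\cap L\times S\cap R$ are pairwise disjoint, and the only coupling of the two maximality requirements is that each $v\in L\setminus(S\cup T)$ needs a neighbour in both $S\cap R$ and $T\cap R$. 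Since these are edge-disjoint sets, an extension of Lemma~\ref{mssproba} yields
\[
\pr[S,T\in\mstab]=q^{2ab}(1-q^b)^{2(m-2a)}(1-q^a)^{2(n-2b)}\,\Phi(S,T),
\]
with $\Phi(S,T)$ the probability that two $a\times b$ random bipartite graphs have no isolated vertex. Under the range hypotheses one has $a\leq\sqrt[5]{m}$ and $b\leq\sqrt[5]{n}$, so the expected number of isolated vertices in each such graph is at most $aq^b+bq^a\leq a/m+b/n\to 0$; thus $\Phi\to 1$ and, similarly, $(1-q^b)^{2a}(1-q^a)^{2b}\to 1$, giving $\Sigma_{00}=(1+o(1))\exp[X]^2$.

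The main work is to show $\sum_{(k,\ell)\neq(0,0)}\Sigma_{k\ell}=o(\exp[X]^2)$. The count of $(k,\ell)$-overlap pairs is
\[
\binom{m}{a}\binom{n}{b}\binom{a}{k}\binom{m-a}{a-k}\binom{b}{\ell}\binom{n-b}{b-\ell},
\]
and for any such pair the stability of $S\cup T$ costs $q^{2ab-k\ell}$ because the $k\ell$ edges inside $S\cap T$ are shared between the two stability conditions. Tracking the partly-shared maximality factors (each exterior vertex still contributes independently to one of the two requirements, with a mild boost from the shared block), and comparing with $\exp[X]^2$ using the lower bound of Lemma~\ref{exp.small.mss}, I expect a bound of the shape
\[
\frac{\Sigma_{k\ell}}{\exp[X]^2}\leq C\Big(\frac{a^2}{m}\Big)^{k}\Big(\frac{b^2}{n}\Big)^{\ell}q^{-k\ell}
\]
for some absolute constant $C$. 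The factors $a^2/m\leq m^{-3/5}$ and $b^2/n\leq n^{-3/5}$ force geometric decay in $k$ and $\ell$, and a modest fraction of this decay easily swallows the coupling factor $q^{-k\ell}\leq q^{-ab}$ (which is polynomial in $mn$, while $m^{-3/5}n^{-3/5}$ multiplied by itself a constant number of times overwhelms any fixed polynomial). Summing the resulting geometric series over $(k,\ell)\neq(0,0)$ yields $o(1)$, whence $\exp[X^2]=(1+o(1))\exp[X]^2$ and Chebyshev finishes the proof.

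The principal obstacle is the overlap book-keeping. When $S$ and $T$ share $\ell\geq 1$ right vertices, each $v\in L\setminus(S\cup T)$ requires a neighbour in both $S\cap R$ and $T\cap R$, and these events are no longer independent because of the shared block $S\cap T\cap R$; a careful inclusion–exclusion on the three edge blocks $v\times(S\cap R\setminus T)$, $v\times(T\cap R\setminus S)$, $v\times(S\cap T\cap R)$ is what introduces the factor $q^{-k\ell}$ above. Verifying that this factor is still negligible compared with the expectation lower bound of Lemma~\ref{exp.small.mss} across the full regime $m\leq q^{-\sqrt[5]{n}}$, $n\leq q^{-\sqrt[5]{m}}$ will be the main technical step.
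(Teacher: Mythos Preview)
Your approach is the same as the paper's: second moment via Chebyshev, decompose $\exp[X^2]$ by overlap type $(k,\ell)$, show the $(0,0)$ term is $(1+o(1))\exp[X]^2$, and show the rest is $o(\exp[X]^2)$. Two points worth noting.

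First, a simplification you are missing. For $(k,\ell)\neq(0,0)$ the paper does not track the shared maximality conditions at all: it simply bounds $\Sigma_{k\ell}$ by the expected number $B_{k\ell}$ of pairs of \emph{stable} (not maximal stable) sets with the prescribed overlap. This gives exactly your target shape
\[
\frac{B_{k\ell}}{\exp[X]^2}\le C\,\frac{a^{2k}b^{2\ell}}{m^k n^\ell q^{k\ell}}
\]
without any inclusion--exclusion. Your discussion of the three edge blocks $v\times(S\cap R\setminus T)$, $v\times(T\cap R\setminus S)$, $v\times(S\cap T\cap R)$ is unnecessary; the factor $q^{-k\ell}$ comes solely from the shared stability constraint, which you already identified.

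Second, your summation argument contains a genuine error. You write that $q^{-k\ell}\le q^{-ab}$ is ``polynomial in $mn$''. It is not: since $a\approx\log_{1/q}(n)$ and $b\approx\log_{1/q}(m)$, one has $q^{-ab}\approx n^{\log_{1/q}(m)}$, which for $m\approx n$ is $n^{\log_{1/q}(n)}$, superpolynomial. Relatedly, your ``geometric decay'' fails: for fixed $\ell=b$ the $k$-th term is $(a^2q^{-b}/m)^k\approx(a^2)^k$, which grows. The paper handles this with a different device: assuming without loss of generality $m\le n$, it observes that $m^kq^{k\ell}\ge(mq^b)^k\ge 1$ and $n^\ell q^{k\ell}\ge 1$, whence $m^kn^\ell q^{k\ell}\ge m^{\max(k,\ell)}$. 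This yields
\[
\frac{B_{k\ell}}{\exp[X]^2}\le C\,\frac{a^{2k}b^{2\ell}}{m^{\max(k,\ell)}}\le C\,\frac{m^{(2k+1)/5}(\log_{1/q}m)^{2\ell+1}}{m^{\max(k,\ell)}},
\]
using $a\le\sqrt[5]{m}$; this tends to $0$ uniformly for $k+\ell\ge 1$, and since there are only $(a+1)(b+1)$ terms, no series summation is needed. Replace your final paragraph with this argument and the proof goes through.
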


\begin{proof}
Since the statement of Lemma~\ref{superpoly.lower.bound} is symmetric in $m$ and $n$, we may assume throughout the proof that $m \le n$.
Moreover we assume that $m\leq q^{-\sqrt[5]{n}}$ and 
$n\leq q^{-\sqrt[5]{m}}$.

Let $a := \Lside$ and $b := \Rside$.
Chebyshev's inequality \eqref{cheb} gives us
\[
\pr\left[\smallmss \le \tfrac{1}{2} \exp[\smallmss]\right] \le \frac{4\sigma^2}{ \exp[\smallmss]^2},
\]
where $\sigma^2 = \exp[\smallmss^2] - \exp[\smallmss]^2$ is the variance of the random variable $\smallmss$.
We have
\[
\exp[\smallmss^2] = \sum_{i=0}^a \sum_{j=0}^b A_{i,j},
\]
where $A_{i,j}$ denotes the expected number of pairs $(S,T)$ of maximal stable sets of $G$ 
with $|S \cap L(G)| = a = |T \cap L(G)|$, $|S \cap R(G)| = b = |T \cap R(G)|$, 
$|S \cap T \cap L(G)| = i$, and $|S \cap T \cap R(G)| = j$.
By Lemma~\ref{mssproba},
\begin{equation}\label{exp.lrside}
\exp[\smallmss]^2 = {m \choose a}^2 {n \choose b}^2 q^{2ab} (1-q^a)^{2(n-b)} (1-q^b)^{2(n-a)}.
\end{equation}

We will first show that 
there is an $N_1$ so that 
\begin{equation}\label{lim.A00}
\frac{A_{0,0} -\exp[\smallmss]^2}{ \exp[\smallmss]^2} \leq \frac{\epsilon}{8},
\end{equation} 
for all $m,n$ with $m+n\geq N_1$.

To prove this, observe that
\[
A_{0,0} \leq {m \choose a} {m-a \choose a} {n \choose b} {n-b \choose b} q^{2ab} (1-q^a)^{2(n-2b)} (1-q^b)^{2(m-2a)}.
\]
Indeed, while the binomial coefficients count the number of possibilities to choose 
the disjoint sets $S$ and $T$, the factor $q^{2ab}$ is the probability that $S$ and $T$ are stable sets.
Furthermore, the probability that every vertex in $R(G)\sm (S\cup T)$ has a neighbour in $S$ and a 
neighbour in $T$ is  equal to $(1-q^a)^{2(n-2b)}$; the factor $(1-q^b)^{2(m-2a)}$
expresses the analogous probability for $L(G)$. 

The above estimation for $A_{0,0}$ together with \eqref{exp.lrside} yields
$A_{0,0} / \exp[\smallmss]^2 \leq (1-q^a)^{-2b} (1-q^b)^{-2a}$, and consequently
\[
\frac{A_{0,0} -\exp[\smallmss]^2}{\exp[\smallmss]^2} \leq (1-q^a)^{-2b} (1-q^b)^{-2a} - 1.
\]
Next, note that if $n$ is large enough so that $\tfrac{1}{nq}\leq\tfrac{1}{2}$ then
\begin{align*}
(1-q^a)^{2b} & \geq (1-q^{\log_{1/q}(n)-1})^{2\log_{1/q}(m)} \geq \left(1-\frac{1}{nq}\right)^{2\sqrt[5]{n}}\\
&\geq \left(e^{-\frac{2}{nq}}\right)^{2\sqrt[5]{n}}\to e^0=1\text{ as }n\to \infty, 
\end{align*}
where we have used that $1-x\geq e^{-2x}$ for all $0\leq x\leq 1/2$.
The analogous estimation holds for $(1-q^b)^{-2a}$. Thus, if $m$ and $n$ are large enough, 
then 
\[
(1-q^a)^{-2b} (1-q^b)^{-2a} - 1\leq \frac{\epsilon}{8}.
\]
Since it follows from $m\leq q^{-\sqrt[5]{n}}$ and 
$n\leq q^{-\sqrt[5]{m}}$ that both of $m$ and $n$ have to be large if $m+n$ is large, 
we may therefore choose  $N_1$ so that~\eqref{lim.A00} holds. 

\medskip
We will now investigate $A_{i,j} / \exp[\smallmss]^2$ when $i+j \ge 1$.
For this, let
\begin{equation}\label{Bij}
B_{i,j} = {m \choose i} {m-i \choose a-i} {m-a \choose a-i} {n \choose j} {n-j \choose b-j} {n-b \choose b-j} q^{2ab - ij}.
\end{equation}
Note that $B_{i,j}$ equals the expected number of pairs $(S,T)$ of stable sets
(not necessarily maximal)
 with $|S \cap L(G)| = a = |T \cap L(G)|$, $|S \cap R(G)| = b = |T \cap R(G)|$, $|S \cap T \cap L(G)| = i$, and $|S \cap T \cap R(G)| = j$.
Hence, $A_{i,j} \le B_{i,j}$ for all $0 \le i \le a$ and $0 \le j \le b$.

For $r,s \in \mathbb{N}$ with $r \ge s$, let $(r)_s$ 
denote the $s$-th falling factorial of $r$, i.e., $(r)_s = r (r-1) \cdots (r-s+1)$.
For the binomial coefficients appearing in $B_{i,j}/\exp[\smallmss]^2$ 
that involve $m$, we deduce
\begin{align*}
\frac{{m \choose i} {m-i \choose a-i} {m-a \choose a-i}}
{{m\choose a}^2} 
= \frac{(m-a)_{a-i}\,(a)_i^2}
{(m)_a\, i!} 
\leq \frac{(a)_i^2}
{(m)_i\, i!}
\leq \frac{(a)_i^2}
{m^i}
\leq \frac{a^{2i}}
{m^i},
\end{align*}
for all integers $i$ with $0 \le i \le a$. With the analogous estimation 
for the binomial coefficients
involving $n$, we obtain
\begin{equation}\label{binest}
\frac{{m \choose i} {m-i \choose a-i} {m-a \choose a-i}
{n \choose j} {n-j \choose b-j} {n-b \choose b-j}}
{{m\choose a}^2 {n\choose b}^2}\leq 
\frac{a^{2i} b^{2j}}
{m^in^j}, 
\end{equation}
for all integers $i,j$ with $0 \le i \le a$ and $0 \le j \le b$.

An easy calculation 
(very similar to \eqref{onebound}) shows that there is a constant $c$ such that 
\begin{equation}\label{yetanother}
c \ge 4(1-q^a)^{-2(n-b)} (1-q^b)^{-2(m-a)} 
\end{equation}
for all $m$.

Recalling the explicit expression~\eqref{exp.lrside} for $\exp[\smallmss]^2$, 
and then applying first~\eqref{binest} and then~\eqref{yetanother}
we deduce  
\begin{align*}
\frac{(a+1) (b+1) B_{i,j}}{\exp[\smallmss]^2} 
&= \frac{(a+1) (b+1){m \choose i} {m-i \choose a-i} {m-a \choose a-i} {n \choose j} {n-j \choose b-j} {n-b \choose b-j} q^{2ab - ij}}
{{m \choose a}^2 {n \choose b}^2 q^{2ab} (1-q^a)^{2(n-b)} (1-q^b)^{2(m-a)}}\\
&\stackrel{\eqref{binest}}{\le} \frac{ (a+1) (b+1) a^{2i} b^{2j}}{m^i n^j q^{ij}
(1-q^a)^{2(n-b)} (1-q^b)^{2(m-a)}}\\
&\le \frac{4 a^{2i+1} b^{2j+1}}{m^i n^j q^{ij}
(1-q^a)^{2(n-b)} (1-q^b)^{2(m-a)}}\\
&\stackrel{\eqref{yetanother}}{\le} \frac{c a^{2i+1} b^{2j+1}}{m^i n^j q^{ij}}
\end{align*}
for all $i,j$ with $i+j \ge 1$, and where we assume in the third 
step that $m$ and $n$ are large enough so that $a=\lfloor\log_{1/q}(n)\rfloor\geq 1$ 
and $b=\lfloor\log_{1/q}(m)\rfloor\geq 1$.
(Again, this is possible since $m\leq q^{-\sqrt[5]{n}}$ and 
$n\leq q^{-\sqrt[5]{m}}$ implies that both of $m$ and $n$ have to be large if $m+n$ is large.)

In order to continue with the estimation we consider the term $m^i n^j q^{ij}$. 
For $0 \leq i \leq a$ and $0 \le j \le b$, 
we see that 
\(
m^iq^{ij}\geq (mq^b)^i\geq (mq^{\log_{1/q}(m)})^i=(\frac{m}{m})^i=1.
\)
In a similar way, we obtain $n^j q^{ij} \ge 1$. 
Now, if $i\geq j$ then $m^i n^j q^{ij}= m^i(n^jq^{ij})\geq m^i$.
If, on the other hand, $i<j$ then $n^jm^iq^{ij}\geq n^j\geq m^j$, 
since $m\leq n$.
Thus
\begin{equation}\label{sparem}
m^i n^j q^{ij} \geq m^{\max(i,j)}, \text{ for all $m,n$ with $m\leq n$}.
\end{equation}
Using~\eqref{sparem}, we obtain
\begin{align*}
\frac{(a+1) (b+1) B_{i,j}}{\exp[\smallmss]^2} 
&\leq 
 \frac{c a^{2i+1} b^{2j+1}}{m^i n^j q^{ij}}
\le \frac{c a^{2i+1} b^{2j+1}}{m^{\max(i,j)}}\\
&\leq \frac{c  \log_{1/q}(n)^{2i+1} \log_{1/q}(m)^{2j+1}}{m^{\max(i,j)}}\\
&\leq \frac{c  (\sqrt[5]{m})^{2i+1} \log_{1/q}(m)^{2j+1}}{m^{\max(i,j)}}\\
&\leq \frac{c  m^{\frac{2}{5}i+\frac{1}{5}} \log_{1/q}(m)^{2j+1}}{m^{\max(i,j)}},
\end{align*}
since $n\leq q^{-\sqrt[5]{m}}$ and $m\leq n$. 
Now, since $i+j\geq 1$, the last term tends to~$0$ for $m\to\infty$. Therefore, 
there is an $N_2$ independent of $i,j$ such that for all $m+n\geq N_2$
\[
\frac{(a+1) (b+1) B_{i,j}}{\exp[\smallmss]^2} 
\leq \frac{\epsilon}{8},
\]
whenever $i+j\geq 1$.

Thus, for $N=\max(N_1,N_2)$ and $m,n$ with $m+n\geq N$,
we get with~\eqref{lim.A00}
\begin{align*}
\pr[\smallmss \le \tfrac{1}{2} \exp[\smallmss]] &\le \frac{4\sigma^2}{ \exp[\smallmss]^2} \\
&\le 4\frac{A_{0,0}-\exp[\smallmss]^2}{ \exp[\smallmss]^2} \\
&+ \frac{4(a+1) (b+1) \max\{B_{i,j} : 0 \le i \le a, 0 \le j \le b, i+j \ge 1 \}}{ \exp[\smallmss]^2}\\
&\le 4\frac{\epsilon}{8} + 4\frac{\epsilon}{8} = \epsilon.
\end{align*}
\end{proof}

Let us quickly explain why the method of Lemma~\ref{superpoly.lower.bound} 
ceases to work when $n \geq q^{-\sqrt{m}}$.
In the proof we aim for $\pr[ \smallmss < \tfrac{1}{2} \exp[\smallmss]]$ to tend to~$0$ 
with growing $m+n$. We achieve that by forcing  the right hand side of \eqref{binest} 
to vanish for large $m+n$, and all $i,j$ with $i + j \ge 1$.
In particular, when $i =1$ and $j=0$, we need $\tfrac{a^2}{m} = {\Lside}^2 / m$ 
to vanish with growing $m$, which in turn requires that $n < q^{-\sqrt{m}}$.
\medskip

To finish this case, we observe that, 
with high probability, Lemma~\ref{squpperbound} bounds the number $\lmss$
of maximal stable sets with large left  side with a polynomial in~$n$, while we will see below
that, again with high probability,
Lemmas~\ref{exp.small.mss} and~\ref{superpoly.lower.bound} translate into 
superpolynomially many maximal stable sets with small left side.
\begin{lemma}\label{mdmsides}
For every $\epsilon>0$ there is an $N$ so that 
for every $G\in \rbip(m,n;p)$
\[
\pr[\av\leq\tfrac{m}{2}] \geq 1-\epsilon
\]
for all $m,n$ with $m+n\geq N$,  $m \leq q^{-\sqrt[5]{n}}$ and $n \leq q^{-\sqrt[5]{m}}$.
\end{lemma}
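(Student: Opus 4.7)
The plan is to invoke Lemma~\ref{largeandsmall} with $\delta=0$ and some fixed $\nu\in(0,1)$, say $\nu=\tfrac12$. Under this setup it suffices to exhibit, with high probability, substantially more maximal stable sets with left side of size at most $m/4$ than with left side at least $m/2$; the former will be counted from below by $\smallmss$ and the latter from above by $\lmsshalf$.

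First I would bound $\lmsshalf$ via Lemma~\ref{squpperbound}. For $m$ large enough, $\sqrt[5]{m}\leq m/4$, so $n\leq q^{-\sqrt[5]{m}}\leq q^{-m/4}$, and the lemma applies with $\alpha=\tfrac14$ to give $\lmsshalf\leq 2n^{\log_q(1/4)}$ with probability at least $1-\epsilon/2$. Next, Lemma~\ref{superpoly.lower.bound} applies directly in the stated parameter regime and yields $\smallmss>\tfrac12\exp[\smallmss]$ with probability at least $1-\epsilon/2$, while Lemma~\ref{exp.small.mss} bounds $\exp[\smallmss]$ below by $c\binom{m}{a}b^{-b}$, where $a=\lfloor\log_{1/q}(n)\rfloor$ and $b=\lfloor\log_{1/q}(m)\rfloor$. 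The constraint $n\leq q^{-\sqrt[5]{m}}$ also forces $a\leq\sqrt[5]{m}$, which is at most $m/4$ for $m$ large, so the sets counted by $\smallmss$ genuinely qualify as ``small'' in the sense required by Lemma~\ref{largeandsmall}.

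The heart of the argument is then to verify
\[
\tfrac{c}{2}\binom{m}{a}b^{-b}\;\geq\;4n^{\log_q(1/4)}
\]
for $m+n$ beyond a uniform threshold. Using $\binom{m}{a}\geq (m/a)^a$ together with $a\leq\sqrt[5]{m}$ gives $\binom{m}{a}\geq m^{4a/5}$, so that $\ln\binom{m}{a}=\Omega(\ln m\cdot\ln n)$. On the other hand $b\ln b=O(\ln m\cdot\ln\ln m)$, and the right-hand side contributes only $O(\ln n)$. Because the two constraints $m\leq q^{-\sqrt[5]{n}}$ and $n\leq q^{-\sqrt[5]{m}}$ force both $\ln m\to\infty$ and $\ln n\to\infty$ as $m+n\to\infty$, the dominant term $\ln m\cdot\ln n$ swamps both error contributions and the inequality holds.

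A union bound over the two failure events leaves both $\lmsshalf\leq 2n^{\log_q(1/4)}$ and $\smallmss\geq 2\lmsshalf$ simultaneously true with probability at least $1-\epsilon$, and then Lemma~\ref{largeandsmall} (with $\nu=\tfrac12$, $\delta=0$) yields $\av\leq m/2$. I expect the main obstacle to lie in the logarithmic comparison above: one has to handle the whole admissible region, including the boundary where $a$ is as large as $\sqrt[5]{m}$ or $b$ is as large as $\sqrt[5]{n}$, so some care with the bookkeeping is required to choose a single threshold $N$ that works uniformly in $m$ and $n$.
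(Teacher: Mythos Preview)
Your outline is the paper's proof: bound $\lmsshalf$ via Lemma~\ref{squpperbound}, bound $\smallmss$ from below via Lemmas~\ref{exp.small.mss} and~\ref{superpoly.lower.bound}, check that $a=\Lside\leq\sqrt[5]{m}\leq\tfrac{m}{4}$, and then invoke Lemma~\ref{largeandsmall} with $\nu=\tfrac12$ and $\delta=0$.

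One warning about the comparison step, precisely where you anticipate trouble. Your claim that the main term $\ln m\cdot\ln n$ ``swamps'' the error $b\ln b=O(\ln m\cdot\ln\ln m)$ does \emph{not} hold as a pure asymptotic statement: on the boundary $m=q^{-\sqrt[5]{n}}$ one has $\ln n\asymp\ln\ln m$, so the ratio of the two quantities stays bounded rather than tending to infinity. Soft big-$O$ reasoning is therefore insufficient here; the constants must be tracked, and in particular the second hypothesis $\log_{1/q}(m)\leq\sqrt[5]{n}$ has to be used for the $b^{-b}$ term, not just the first hypothesis for $\binom{m}{a}$. The paper handles this by writing everything as a power of $n$: from $a\geq\tfrac12\log_{1/q}(n)$ and $\log_{1/q}(n)\leq\sqrt[5]{m}$ one gets $(m/a)^a\geq n^{\frac12\log_{1/q}(m)}\cdot n^{-\frac15\log_{1/q}(m)}$, while $\log_{1/q}(m)\leq\sqrt[5]{n}$ gives $b^{-b}\geq n^{-\frac15\log_{1/q}(m)}$, arriving at $\smallmss\geq c\,n^{\frac{1}{10}\log_{1/q}(m)}$, which visibly dominates $2n^{\log_{1/q}(4)}$ once $m$ is large.
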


\begin{proof}
Choose $N_1$ large enough so that it is at least as large as the $N$ in
Lemma~\ref{exp.small.mss} with $\tfrac{\epsilon}{4}$ and as the $N$ in 
Lemma~\ref{superpoly.lower.bound}, as well with $\tfrac{\epsilon}{4}$, 
and so that $\lfloor\log_{1/q}(n)\rfloor\geq\tfrac{1}{2}\log_{1/q}(n)$
for any $m,n$ with $m+n\geq N_1$ and $m \leq q^{-\sqrt[5]{n}}$.

In the remainder of the proof, we consider integers $m,n$ with $m+n\geq N_1$,  $m \leq q^{-\sqrt[5]{n}}$ and $n \leq q^{-\sqrt[5]{m}}$.
By Lemmas~\ref{exp.small.mss} and~\ref{superpoly.lower.bound},
there is a constant $c>0$, independent of $m$ and $n$, so that 
the probability that  
\begin{align*}
\smallmss 
& \leq c \left( \frac{m}{\Lside} \right)^{\Lside}\Rside^{-\Rside}\\
&\leq c {m\choose \Lside}\, \Rside^{-\Rside},
\end{align*}
is at most $\tfrac{\epsilon}{2}$. 

Now, using that $\log_{1/q}(m)\leq\sqrt[5]{n}$ and $\log_{1/q}(n)\leq\sqrt[5]{m}$
we obtain
\begin{align*}
& c \left( \frac{m}{\Lside} \right)^{\Lside} \Rside^{-\Rside} \\
& \geq c m^{\frac{1}{2}\log_{1/q}(n)}\cdot \log_{1/q}(n)^{-\log_{1/q}(n)}\cdot \log_{1/q}(m)^{-\log_{1/q}(m)}\\
& \geq cn^{\frac{1}{2}\log_{1/q}(m)}\cdot n^{-\log_{1/q}(\log_{1/q}(n))}\cdot \left(\sqrt[5]{n}\right)^{-\log_{1/q}(m)}\\
& \geq cn^{\frac{1}{2}\log_{1/q}(m)}\cdot n^{-\log_{1/q}(\sqrt[5]{m})}\cdot \left(n\right)^{-\frac{1}{5}\log_{1/q}(m)}
= cn^{\frac{1}{10}\log_{1/q}(m)}.
\end{align*}
It follows that 
\begin{equation}\label{stepA}
\pr\left[\smallmss\leq cn^{\frac{1}{10}\log_{1/q}(m)}\right]\leq\frac{\epsilon}{2}.
\end{equation}

On the other hand, we obtain from Lemma~\ref{squpperbound} that there is a $N_2$ 
so that 
\begin{equation}\label{stepB}
\pr\left[\lmsshalf >2n^{\log_{q}(1/4)}\right] \leq \frac{\epsilon}{2},
\end{equation}
whenever $m+n\geq N_2$.

Recall that $\smallmss$ is a lower bound on the number of maximal stable sets $S$
with $|S\cap L(G)|=\lfloor\log_{1/q}(n)\rfloor$. 
Since $m\leq q^{-\sqrt[5]{n}}$ and 
$n\leq q^{-\sqrt[5]{m}}$ implies that both of $m$ and $n$ have to be large if $m+n$ is large,
we may choose $N_3$ large enough so that 
 $\sqrt[5]{m}\leq \tfrac{m}{4}$ 
and $cn^{\frac{1}{10}\log_{1/q}(m)} \geq 4n^{\log_{q}(1/4)}$ whenever $m+n\geq N_3$.
We claim that 
\begin{equation}\label{stepC}
\text{\em
if $\smallmss\geq cn^{\frac{1}{10}\log_{1/q}(m)}$ and $\lmsshalf\leq 2n^{\log_{q}(1/4)}$
then $\av\leq\tfrac{m}{2}$. 
}
\end{equation}
Indeed, since $\lfloor\log_{1/q}(n)\rfloor\leq\sqrt[5]{m}\leq\tfrac{m}{4}$ 
this is a direct consequence of Lemma~\ref{largeandsmall} with $\nu=\tfrac{1}{2}$
and $\delta=0$.

Finally, the lemma follows from~\eqref{stepA},~\eqref{stepB} and~\eqref{stepC}
if $N$ is chosen to be at least $\max(N_1,N_2,N_3)$.
\end{proof}

\subsection{The case $q^{-\sqrt[5]{m}} \le n \le q^{-\frac{m}{16}}$}\label{sect.m/16}

When the right side of the random bipartite graph becomes much larger than the left side,
we cannot control the variance of $\smallmss$ anymore, and indeed Chebyshev's 
inequality cannot even give a positive probability, however small, that the graph 
contains many maximal stable sets of small left side. 
We therefore use another standard tool, Hoeffding's 
inequality, which yields a tiny but non-zero probability. 
We then leverage this tiny probability
to a high probability by cutting up the right side of the graph into a large number
of large pieces to each of which we apply Hoeffding's theorem:

\begin{theorem}[Hoeffding~\cite{Hoe63}]\label{thm.hoeffding}
For $i=1,\ldots,s$, let $X_i:\Omega_i\to [0,\rho]$ be independent random variables, 
and let $X=\sum_{i=1}^sX_i$. Then 
\[
\pr[X\geq \exp[X]+\lambda]\leq e^{-\frac{2\lambda^2}{s\rho^2}}
\]
and 
\[
\pr[X\leq \exp[X]-\lambda]\leq e^{-\frac{2\lambda^2}{s\rho^2}}
\]
for every $\lambda $ with $\lambda>0$.
\end{theorem}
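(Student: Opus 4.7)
The plan is to use the standard Chernoff--Cram\'er exponential moment method. By symmetry it suffices to prove the upper-tail bound, since applying that bound to the variables $X_i' := \rho - X_i \in [0,\rho]$ (which remain independent) and noting that $X' - \exp[X'] = -(X-\exp[X])$ converts the upper tail of $X'$ into the lower tail of $X$.

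First I would fix a parameter $t>0$ to be optimized later, center the variables by setting $Y_i := X_i - \exp[X_i]$ and $Y := \sum_i Y_i = X - \exp[X]$, and apply Markov's inequality \eqref{markov} to the non-negative variable $e^{tY}$:
\[
\pr[Y \geq \lambda] = \pr[e^{tY} \geq e^{t\lambda}] \leq e^{-t\lambda}\exp[e^{tY}] = e^{-t\lambda}\prod_{i=1}^{s}\exp[e^{tY_i}],
\]
where independence of the $Y_i$ factorizes the moment generating function in the last step.

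The heart of the proof is then the auxiliary claim (Hoeffding's lemma) that $\exp[e^{tY_i}] \leq e^{t^2\rho^2/8}$ for each $i$. Since $Y_i$ has mean zero and lies in an interval of length $\rho$, I would obtain it by studying $\varphi(t) := \log \exp[e^{tY_i}]$: one verifies $\varphi(0)=0$ and $\varphi'(0)=\exp[Y_i]=0$, and bounds $\varphi''(t)$ by the variance of $Y_i$ under the exponentially tilted measure of density $e^{tY_i}/\exp[e^{tY_i}]$. As that measure is supported on an interval of length $\rho$, Popoviciu's inequality gives $\varphi''(t) \leq \rho^2/4$, so Taylor's theorem yields $\varphi(t) \leq t^2\rho^2/8$. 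Substituting into the product bound yields
\[
\pr[Y \geq \lambda] \leq \exp\!\left(-t\lambda + \tfrac{st^2\rho^2}{8}\right),
\]
and the choice $t = 4\lambda/(s\rho^2)$ minimizes the exponent and delivers the claimed tail bound $e^{-2\lambda^2/(s\rho^2)}$.

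The main obstacle is really just Hoeffding's lemma: the exponential-Markov step and the final optimization in $t$ are routine, but without the sharp constant $1/8$ in the moment generating function bound one would only recover a qualitative subgaussian tail rather than the explicit exponent $2\lambda^2/(s\rho^2)$. A fully elementary alternative to the tilted-variance argument is to exploit the convexity of $x\mapsto e^{tx}$ on the interval $[a,b]$ containing $Y_i$ to obtain the linear upper bound $e^{tx}\leq \tfrac{b-x}{b-a}e^{ta} + \tfrac{x-a}{b-a}e^{tb}$, take expectations using $\exp[Y_i]=0$, and then bound the resulting function $\log\bigl(\pi e^{(1-\pi)\tau}+(1-\pi)e^{-\pi\tau}\bigr)$ of $\tau := t(b-a)$ by $\tau^2/8$ via a careful Taylor expansion of its second derivative.
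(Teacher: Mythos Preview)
Your proof is the standard Chernoff--Hoeffding argument and is correct; the symmetry reduction, the exponential Markov step, Hoeffding's lemma via the tilted-variance (or convexity) bound, and the optimization in $t$ are all fine and yield the stated exponent.

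However, note that the paper does not prove this theorem at all: it is simply quoted as a known result with a citation to Hoeffding~\cite{Hoe63}, and then applied (with $s=1$) in Lemma~\ref{lem.hoeffding.exp}. So there is no ``paper's own proof'' to compare against. Your write-up is a faithful reproduction of Hoeffding's original argument, which is exactly what the citation points to.
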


We will use Theorem~\ref{thm.hoeffding} in the simpler case, when there is 
only one random variable, that is, $s$ will be equal to~$1$.

By $\hoeffmss$ we  denote the number of maximal stable sets $S$ of $G$ with $|S \cap L(G)| = \lfloor \log_{1/q}(\lfloor n / m^{\log_{1/q}(m)} \rfloor) \rfloor$.
Note that, in contrast to $\smallmss$, we put no restriction on $|S \cap R(G)|$.

\begin{lemma}\label{lem.hoeffding.exp}
For integers $m,n$ let $a':=\lfloor \log_{1/q}(\lfloor n / m^{\log_{1/q}(m)} \rfloor) \rfloor$
and $b:=\Rside$. Then there exists a constant $c>0$ so that for every $\epsilon>0$
there is an $N$ such that for $G\in\rbip(m,n;p)$ 
\[
\pr\left[\hoeffmss \ge c{m\choose a'}b^{-b}\right] \ge 1-\epsilon,
\]
for all $m,n$ with $m+n\geq N$ and $m^{2 \log_{1/q}(m)} \leq n \leq q^{-m}$.
\end{lemma}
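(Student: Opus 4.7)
The plan is to compute $\exp[\hoeffmss]$ by linearity, show it is close to the maximum possible value $\binom{m}{a'}$, and then deduce the high-probability conclusion via a reverse-Markov argument on the non-negative gap $\binom{m}{a'} - \hoeffmss$. The structural observation driving the expectation calculation is that any maximal stable set $S$ with $|S \cap L(G)| = a'$ is uniquely determined by its left part $L_0 := S \cap L(G)$: its right part must equal $R_0 := \{u \in R(G) : u \text{ has no neighbour in } L_0\}$, and then maximality of $S$ amounts to the single condition that every vertex of $L(G) \setminus L_0$ has a neighbour in $R_0$. By symmetry and Lemma~\ref{mssproba}, this gives $\exp[\hoeffmss] = \binom{m}{a'}\,p_e$, where $p_e$ is the probability that a fixed $a'$-subset of $L(G)$ satisfies the extension property.

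I would lower-bound $p_e$ by estimating the failure probability. For a fixed $v \in L(G) \setminus L_0$, the event ``$v$ has no neighbour in $R_0$'' means that no vertex $u \in R(G)$ is simultaneously adjacent to $v$ and non-adjacent to every vertex of $L_0$. Since these two conditions involve disjoint edges, their joint per-$u$ probability is $pq^{a'}$, and the events across distinct $u \in R(G)$ are mutually independent. Hence
\[
\pr[v \text{ has no neighbour in } R_0] = (1 - pq^{a'})^n \leq e^{-npq^{a'}}.
\]
By the definition of $a'$ we have $nq^{a'} \geq m^{\log_{1/q}(m)}$, which grows super-polynomially in $m$. A union bound over the at most $m$ choices of $v$ thus gives
\[
\pr[L_0 \text{ does not extend}] \leq m \cdot e^{-p\,m^{\log_{1/q}(m)}} =: \delta_m,
\]
with $\delta_m \to 0$ as $m \to \infty$ at a rate independent of $n$. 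Consequently $\exp[\binom{m}{a'} - \hoeffmss] \leq \delta_m \binom{m}{a'}$.

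Finally, since $\binom{m}{a'} - \hoeffmss$ is non-negative, Markov's inequality~\eqref{markov} applied with threshold $(1 - c b^{-b})\binom{m}{a'}$ yields
\[
\pr\!\left[\hoeffmss < c\binom{m}{a'} b^{-b}\right] \leq \frac{\delta_m}{1 - c b^{-b}} \leq 2\delta_m
\]
for all $m$ large enough that $c b^{-b} \leq 1/2$; one may take $c = 1/2$. Choosing $N$ so that $2\delta_m \leq \epsilon$ for all $m + n \geq N$ finishes the proof, which is possible because the hypothesis $m^{2\log_{1/q}(m)} \leq n \leq q^{-m}$ forces $m \to \infty$ whenever $m + n$ does. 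The main technical hurdle is the second step above, namely, bounding $p_e$ uniformly across the wide range $m^{2\log_{1/q}(m)} \leq n \leq q^{-m}$: a direct Hoeffding-style concentration of $|R_0|$ is too weak at the upper end $n \approx q^{-m}$, where $\exp[|R_0|]^2/n$ need not tend to infinity. The trick of rewriting the complementary event ``$v$ has no neighbour in $R_0$'' as an $n$-fold product over independent candidate-$u$ events, yielding the doubly-exponentially small bound $(1-pq^{a'})^n$, is what makes the estimate uniform in $n$ throughout the allowed range.
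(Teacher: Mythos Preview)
Your proof is correct and takes a genuinely different route from the paper's.

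The paper proceeds by partitioning $R(G)$ into $\lfloor k\rfloor = \lfloor m^{\log_{1/q}(m)}\rfloor$ blocks $R_1,\ldots,R_{\lfloor k\rfloor}$ of size $\lfloor n/k\rfloor$, applies Lemma~\ref{exp.small.mss} to each $G_i=G[L(G)\cup R_i]$ to obtain $\exp[\mathcal S_{G_i}]\ge c\binom{m}{a'}b^{-b}$, and then invokes Hoeffding's inequality (Theorem~\ref{thm.hoeffding}) with a single variable on each piece to get $\pr[\mathcal S_{G_i}\le\tfrac12\exp[\mathcal S_{G_i}]]\le e^{-\frac12 c^2 b^{-2b}}$. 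Independence across the $\lfloor k\rfloor$ blocks then drives the failure probability to zero, since $k$ dominates $b^{2b}$; finally, each maximal stable set of $G_i$ extends to one of $G$ with the same left side.

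Your argument bypasses the partitioning and Hoeffding entirely. The key observation you exploit is that $\hoeffmss\le\binom{m}{a'}$ deterministically and that the expectation is already within $\delta_m\binom{m}{a'}$ of this ceiling, because the event ``$v$ has no neighbour in $R_0$'' factors over the $n$ right vertices as $(1-pq^{a'})^n$, yielding a doubly-exponentially small failure probability per $L_0$. Reverse Markov on the gap $\binom{m}{a'}-\hoeffmss$ then finishes. This is both shorter and sharper: you actually prove $\hoeffmss\ge(1-o(1))\binom{m}{a'}$ with high probability, so the factor $b^{-b}$ in the statement is superfluous from your point of view. The paper's approach, in contrast, is more modular (it reuses Lemma~\ref{exp.small.mss} verbatim) and does not rely on the extension probability $p_e$ being close to~$1$, only on the expectation being large in absolute terms; this is why it needs the amplification via many independent blocks.
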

\begin{proof}
In the following we assume that $m^{2 \log_{1/q}(m)} \leq n \leq q^{-m}$.
Let $k = m^{\log_{1/q}(m)}$, and let $R_1, R_2, \ldots, R_{\lfloor k \rfloor}$ be disjoint subsets of $R(G)$ of size $\lfloor n/k \rfloor$ each.
Moreover, let $G_i = G[L(G) \cup R_i]$ for all $1 \le i \le \lfloor k \rfloor$.
Note that every $G_i$ may be considered as a random bipartite graph in $\rbip(m,\lfloor n/k \rfloor;p)$.

Let $a' = \lfloor \log_{1/q}(\lfloor n/k \rfloor) \rfloor$ and $b = \Rside$.
Note that, since $m^{2 \log_{1/q}(m)} \leq n \leq q^{-m}$, also $m \ge a'$ and $n \ge b$.
Recall that, for $1 \le i \le k$, $\mathcal{S}_{G_i}$ is the number of maximal stable sets of $G_i$ with $|\mathcal{S}_{G_i} \cap L(G)| = a'$ and $|\mathcal{S}_{G_i} \cap R(G)| = b$.

Applying Lemma~\ref{exp.small.mss} to $G_i$, we get
\begin{equation}\label{Yi.lower.bound}
\exp[\mathcal{S}_{G_i}] \ge c {m \choose a'} b^{-b},
\end{equation}
where $c > 0$ is some constant.

Clearly the value of $\mathcal{S}_{G_i}$ does never exceed ${m \choose a'}$.
Thus, by Theorem~\ref{thm.hoeffding},
\[
\pr[\mathcal{S}_{G_i} \le \tfrac{1}{2} \exp[\mathcal{S}_{G_i}]] \le e^{- \frac{1}{2} \mathbf{E}[\mathcal{S}_{G_i}]^2 {m \choose a'}^{-2}} \stackrel{\eqref{Yi.lower.bound}}{\le} e^{- \frac{1}{2} c^2 b^{-2b}}.
\]
The edge sets of the $G_i$ are pairwise disjoint, and thus the random variables $\mathcal{S}_{G_i}$
are independent:
\[
\pr[\mathcal{S}_{G_i} \le \tfrac{1}{2} \mathbf{E}[\mathcal{S}_{G_i}] \mbox{ for all } 1 \le i \le \lfloor k \rfloor] \le e^{- \lfloor k \rfloor \frac{1}{2} c^2 b^{-2b}}.
\]

Since $\lfloor k \rfloor = \lfloor m^{\log_{1/q}(m)} \rfloor$, it dominates $b^{-2b}$.
Hence, $\lim_{m \rightarrow \infty} \lfloor k \rfloor \tfrac{1}{2} c^2 b^{-2b} = \infty$.
Thus, there is an $N$ such that $e^{- \lfloor k \rfloor \frac{1}{2} c^2 b^{-2b}} \le \epsilon$ whenever $m + n \ge N$, as $m^{2 \log_{1/q}(m)} \leq n \leq q^{-m}$ implies that $m$ grows with $m+n$.
Hence, assuming $m + n \ge N$,
\begin{equation}\label{hoeffding.trick}
\pr[\mathcal{S}_{G_i} \le \tfrac{1}{2} \mathbf{E}[\mathcal{S}_{G_i}] \mbox{ for all } 1 \le i \le \lfloor k \rfloor] \le e^{- \lfloor k \rfloor \frac{1}{2} c^2 b^{-2b}} \le \epsilon,
\end{equation}
Thus, with probability $1-\epsilon$, there is an $i$ for which $\mathcal{S}_{G_i} \ge c {m \choose a'} b^{-b}$.
Note that every maximal stable set $S$ of $G_i$ can be extended to a maximal stable set $S'$ of $G$ such that $S' \cap V(G_i) = S$.
This extension is injective and, moreover, $|S' \cap L(G)| = |S \cap L(G)| = a'$.
Hence, every maximal stable set counted by $\mathcal{S}_{G_i}$ is also counted by $\hoeffmss$, i.e., $\hoeffmss \ge \mathcal{S}_{G_i}$.
This completes the proof.
\end{proof}

Observe that, in order to apply \eqref{hoeffding.trick}, 
we need $k$ to dominate $b^{2b}$, where $b=\Rside$.
Hence, $k$ and thus also $n$ should be of the order at least $m^{2 \log_{1/q}(m) \log_{1/q}(\log_{1/q}(m))}$.
This means that we could not use this method before, when $m$ and $n$ had about the same size.

\begin{lemma}\label{largerightside}
For every $\epsilon>0$ there is an $N$ so that for $G\in\rbip(m,n;p)$ 
\[
\pr\left[\av\leq \tfrac{m}{2}\right]\geq 1-\epsilon,
\]
for all $m,n$ with $m+n\geq N$ and $q^{-\sqrt[5]{m}} \le n \le q^{-\frac{m}{16}}$.
\end{lemma}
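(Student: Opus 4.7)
The plan is to repeat the pattern of Lemma~\ref{mdmsides}, but replace the Chebyshev-based lower bound on $\smallmss$ by the Hoeffding-based lower bound on $\hoeffmss$ supplied by Lemma~\ref{lem.hoeffding.exp}. Concretely, I will combine Lemma~\ref{squpperbound} (to control the number of maximal stable sets with left side at least $\tfrac{m}{2}$) with Lemma~\ref{lem.hoeffding.exp} (to produce many maximal stable sets of left side exactly $a'=\lfloor\log_{1/q}(\lfloor n/m^{\log_{1/q}(m)}\rfloor)\rfloor$) and conclude through Lemma~\ref{largeandsmall}.

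First I would verify that both input lemmas apply throughout the range $q^{-\sqrt[5]{m}}\le n\le q^{-m/16}$. Lemma~\ref{squpperbound} applies with $\alpha=1/16<\tfrac12$. For Lemma~\ref{lem.hoeffding.exp}, the upper bound $n\le q^{-m/16}\le q^{-m}$ is immediate, while the lower bound $n\ge m^{2\log_{1/q}(m)}=q^{-2(\log_{1/q}(m))^2}$ follows from $n\ge q^{-\sqrt[5]{m}}$ once $m$ is large, since $\sqrt[5]{m}$ dominates $2(\log_{1/q}(m))^2$. A union bound with error $\tfrac{\epsilon}{2}$ on each side then yields, with probability at least $1-\epsilon$, that simultaneously
\[
\lmsshalf\le 2n^{\log_q(1/4)}
\qquad\text{and}\qquad
\hoeffmss\ge c\binom{m}{a'}b^{-b},
\]
where $b=\lfloor\log_{1/q}(m)\rfloor$ and $c>0$ is a fixed constant.

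Under these two events I want to apply Lemma~\ref{largeandsmall} with $\nu=\tfrac12$ and $\delta=0$. Every maximal stable set counted by $\hoeffmss$ has left side of size exactly $a'\le\log_{1/q}(n)\le m/16\le(1-\tfrac12)\tfrac{m}{2}$, so it qualifies as ``small''. It remains to show $\hoeffmss\ge 2\lmsshalf$. Writing $n=q^{-s}$ with $\sqrt[5]{m}\le s\le m/16$, one computes $n^{\log_q(1/4)}=4^s$ and $a'\ge s-(\log_{1/q}(m))^2-1$. Since $a'\le m/16$, the standard estimate $\binom{m}{a'}\ge(m/a')^{a'}\ge 16^{a'}=4^{2a'}$ gives the Hoeffding lower bound a leading growth of roughly $4^{2s}$ against $n^{\log_q(1/4)}$'s growth of $4^s$, with only polylogarithmic corrections from $b^{-b}$ and from the $(\log_{1/q}(m))^2$ shift in $a'$. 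Because $s\ge\sqrt[5]{m}$, the excess factor $4^s$ swamps those polylogarithmic terms once $m+n$ is large, so the required inequality $c\binom{m}{a'}b^{-b}\ge 4n^{\log_q(1/4)}$ holds for large $m+n$.

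I do not anticipate any serious obstacle: the strategy is outlined at the start of the section, and all the ingredients are in place. The only thing that needs a careful look is that the exponent arithmetic works uniformly in $n$ across the whole interval $[q^{-\sqrt[5]{m}},q^{-m/16}]$. However, since the Hoeffding-based lower bound on $\hoeffmss$ effectively grows like $4^{2s}$ while the bound on $\lmsshalf$ grows only like $4^s$, there is essentially a square-root-of-$n$ slack in the decisive comparison, so no delicate balancing between $m$ and $n$ is needed and a single threshold $N$ suffices.
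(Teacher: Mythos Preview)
Your proposal is correct and follows essentially the same approach as the paper: combine Lemma~\ref{squpperbound} (with some $\alpha<\tfrac12$) and Lemma~\ref{lem.hoeffding.exp}, then feed the resulting bounds into Lemma~\ref{largeandsmall}. The paper's exponent bookkeeping is slightly more elaborate (it pushes the estimate all the way to $\hoeffmss\ge c\,n^{\log_{1/q}(8)}$ versus $\lmsshalf\le 2n^{\log_{1/q}(4)}$), whereas your shortcut $\binom{m}{a'}\ge 16^{a'}$ is a cleaner way to extract the same $4^{2s}$-versus-$4^{s}$ gap; the only cosmetic slip is that the two floors in the definition of $a'$ give $a'\ge s-(\log_{1/q} m)^2-2$ rather than $-1$, which of course does not affect the argument.
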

\begin{proof}
In this proof consider integers $m,n$ with $q^{-\sqrt[5]{m}} \le n \le q^{-\frac{m}{16}}$,
and let $a':=\lfloor \log_{1/q}(\lfloor n / m^{\log_{1/q}(m)} \rfloor) \rfloor$.
Note that $a' \ge \log_{1/q}(n) - \log_{1/q}(m^{\log_{1/q}(m)}) - 2$.
Then
\begin{align*}
{m \choose a'} &\ge \left( \frac{m}{a'} \right)^{a'} 
\ge \left( \frac{m}{\log_{1/q}(n)} \right)^{a'}\\ 
&\ge \left( \frac{m}{\log_{1/q}(n)} \right)^{\log_{1/q}(n) - \log_{1/q}(m^{\log_{1/q}(m)}) - 2}\\ 
&\geq m^{\log_{1/q}(n)- (\log_{1/q}(m)^2 + 2)}\cdot \left(\log_{1/q}(n)\right)^{-\log_{1/q}(n)}\\
&= n^{\log_{1/q}(m)}\cdot m^{- (\log_{1/q}(m)^2 + 2)}\cdot n^{-\log_{1/q}(\log_{1/q}(n))}\\
&\geq n^{\log_{1/q}(m)}\cdot m^{- (\log_{1/q}(m)^2 + 2)}\cdot n^{-\log_{1/q}(m/16)}\\
& = n^{\log_{1/q}(16)}\cdot m^{- (\log_{1/q}(m)^2 + 2)} = n^{\log_{1/q}(8)}\cdot n^{\log_{1/q}(2)}\cdot m^{- (\log_{1/q}(m)^2 + 2)}
\end{align*}
In Lemma~\ref{lem.hoeffding.exp} the binomial coefficient ${m\choose a'}$ is 
divided by $\lfloor\log_{1/q}(m)\rfloor^{\lfloor\log_{1/q}(m)\rfloor}$. 
So, let us compare this factor times $m^{\log_{1/q}(m)^2 + 2}$ against $n^{\log_{1/q}(2)}$. 
When $m$ is large enough, which we may assume since $m^{2 \log_{1/q}(m)} \leq n \leq q^{-m}$ implies that $m$ grows with $m+n$, we get that $(\log_{1/q}(m))^2\geq 2+\log_{1/q}(\log_{1/q}(m))$. 
Thus
\begin{align*}
m^{\log_{1/q}(m)^2 + 2}\cdot \lfloor\log_{1/q}(m)\rfloor^{\lfloor\log_{1/q}(m)\rfloor}
& \leq m^{\log_{1/q}(m)^2 + 2}\cdot (\log_{1/q}(m))^{\log_{1/q}(m)}\\
&= m^{\log_{1/q}(m)^2 + 2+\log_{1/q}(\log_{1/q}(m))}\\
&\leq m^{2\log_{1/q}(m)^2}.
\end{align*}

Using $q^{-\sqrt[5]{m}} \le n$, we get
\begin{align*}
m^{2\log_{1/q}(m)^2} & \leq \left((\log_{1/q}(n))^5\right)^{2(\log_{1/q}(\log_{1/q}(n)^5))^2}\\
& = \left(\log_{1/q}(n)\right)^{250(\log_{1/q}(\log_{1/q}(n))^2}\\
& = q^{-250(\log_{1/q}(\log_{1/q}(n))^3}.
\end{align*}
Since $\log_{1/q}(2)>0$ and $n^{\log_{1/q}(2)}=q^{-\log_{1/q}(2)\cdot \log_{1/q}(n)}$, 
we see that $n^{\log_{1/q}(2)}>m^{2\log_{1/q}(m)^2}$ for large enough $m$ and $n$.

In conjunction with Lemma~\ref{lem.hoeffding.exp} this yields that there is a constant $c>0 $ and an $N_1$ so that 
\begin{equation}\label{win.less.than.m/16}
\pr[\hoeffmss \ge c n^{\log_{1/q}(8)}] \ge 1 - \frac{\epsilon}{2},
\end{equation}
whenever $m+n\geq N_1$.

On the other hand, Lemma~\ref{squpperbound} yields an $N_2$ so that 
\[
\pr[\lmsshalf>2n^{\log_{q}(1/4)}] \leq \frac{\epsilon}{2},
\]
when $m+n\geq N_2$.

Now we choose an $N\geq\max(N_1,N_2)$ so that $2n^{\log_{q}(1/4)}= 2n^{\log_{1/q}(4)}$
is much smaller than $c n^{\log_{1/q}(8)}$, by a factor of $2$, say,  
when $m+n\geq N$.
(This is possible as $m^{2 \log_{1/q}(m)} \leq n \leq q^{-m}$ implies that $m$ is large when $m+n$ is large.)
Thus $2\hoeffmss\geq \lmsshalf$ with a probability of $\geq 1-\epsilon$, and Lemma~\ref{largeandsmall} (with $\delta=0$) completes the proof. Indeed, note that $\hoeffmss$ counts the number of maximal stable sets whose left sides have size 
$\lfloor \log_{1/q}(\lfloor n / m^{\log_{1/q}(m)} \rfloor) \rfloor\leq\log_{1/q}(n)\leq \tfrac{m}{16}$.
\end{proof}

Note that the estimation leading to \eqref{win.less.than.m/16} ceases to work 
when $n > q^{-\frac{m}{4}}$.
We need therefore a finer estimation, which is what we do in the next section.

\subsection{The case $q^{-\frac{m}{16}} \le n < q^{-\frac{m}{2}}$}\label{sect.alpha.m}

For $\kappa \in (0,1)$ the \emph{binary entropy} is defined as
\[
H(\kappa)=\kappa\log_2(\tfrac{1}{\kappa})+(1-\kappa)\log_2(\tfrac{1}{1-\kappa}).
\]
Observe that the binary entropy $H(\kappa)$ is always strictly smaller than~$1$,
except for $\kappa=\tfrac{1}{2}$. 
Moreover, $H$ is monotonously increasing in the interval $\left[0,\tfrac{1}{2}\right]$.
For further details see \cite{MUBook05}.

We will use the following bound on the binomial coefficient, which can be found for instance in Mitzenmacher and Upfal~\cite[Lemma~9.2]{MUBook05}.

\begin{lemma}\label{improvedbin}
For all $m,k \in \mathbb{N}$ with $0 < k < m$,
\[
{m \choose k}\geq
\frac{1}{m+1}
\cdot 2^{H(k/m)\cdot m}.
\]
\end{lemma}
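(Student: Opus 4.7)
The plan is to use the standard trick of identifying $\binom{m}{k}$ as (a scaled version of) the maximum term of a binomial distribution with parameter $\kappa := k/m$. First I would write down the trivial identity
\[
1 = \bigl(\kappa + (1-\kappa)\bigr)^m = \sum_{j=0}^m \binom{m}{j}\kappa^j(1-\kappa)^{m-j},
\]
which is a sum of $m+1$ non-negative terms, so at least one summand must be at least $1/(m+1)$.

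Next I would argue that the summand at $j=k$ is in fact the largest, so the bound $1/(m+1)$ applies to that specific term. This is done by the ratio test: the ratio of consecutive summands,
\[
\frac{\binom{m}{j+1}\kappa^{j+1}(1-\kappa)^{m-j-1}}{\binom{m}{j}\kappa^j(1-\kappa)^{m-j}} = \frac{(m-j)\kappa}{(j+1)(1-\kappa)},
\]
is $\geq 1$ precisely when $j \leq m\kappa - (1-\kappa)$, i.e.\ when $j$ is at most roughly $k$, so the sequence of summands is unimodal and attains its maximum at $j=k$. Therefore $\binom{m}{k}\kappa^k(1-\kappa)^{m-k} \geq 1/(m+1)$.

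Finally, I would compute $\kappa^k(1-\kappa)^{m-k}$ in terms of the binary entropy. Taking $\log_2$ gives
\[
\log_2\bigl(\kappa^k(1-\kappa)^{m-k}\bigr) = k\log_2\kappa + (m-k)\log_2(1-\kappa) = -m\bigl[\kappa\log_2\tfrac{1}{\kappa} + (1-\kappa)\log_2\tfrac{1}{1-\kappa}\bigr] = -m\,H(\kappa),
\]
so $\kappa^k(1-\kappa)^{m-k} = 2^{-m\,H(k/m)}$, and the claimed bound follows by rearranging the previous inequality. There is no real obstacle here: the only delicate point is verifying unimodality of the binomial summands, which is the elementary ratio calculation above.
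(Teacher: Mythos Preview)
Your proof is correct and is the standard argument. Note, however, that the paper does not actually prove this lemma: it simply quotes it as a known bound with a reference to Mitzenmacher and Upfal~\cite[Lemma~9.2]{MUBook05}. The argument you give---identifying $\binom{m}{k}\kappa^k(1-\kappa)^{m-k}$ as the maximal term in the binomial expansion of $(\kappa+(1-\kappa))^m=1$ via the ratio test, and then rewriting $\kappa^k(1-\kappa)^{m-k}=2^{-mH(\kappa)}$---is precisely the textbook proof one finds in such references, so there is nothing to compare.
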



\begin{lemma}\label{asymptotic.lower.bound}
For integers $m,n$ let $\lambda := \log_{1/q}(n) / m$.
For every $\epsilon, \varphi > 0$ there is an $N$ such that for $G \in \rbip(m,n;p)$,
\[
\pr[\hoeffmss \geq 2^{(1-\varphi) \cdot H(\lambda) \cdot m}] \ge 1-\epsilon,
\]
for all $m,n$ with $m+n\geq N$ and $q^{-\frac{m}{16}} \le n \le q^{-\frac{m}{2}}$.
\end{lemma}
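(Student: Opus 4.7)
The plan is to deduce this from Lemma~\ref{lem.hoeffding.exp} by converting its binomial lower bound on $\hoeffmss$ into the desired exponential form via the binary entropy. First I would verify the hypotheses of Lemma~\ref{lem.hoeffding.exp}: the condition $n \le q^{-m}$ is immediate from $n \le q^{-m/2}$ since $q<1$, while $n \ge m^{2\log_{1/q}(m)} = q^{-2(\log_{1/q}(m))^2}$ follows from $n \ge q^{-m/16}$ as soon as $m$ is large enough that $2(\log_{1/q}(m))^2 \le m/16$. That lemma then delivers a constant $c > 0$ and an $N_1$ so that, with probability at least $1-\epsilon$ whenever $m+n \ge N_1$,
\[
\hoeffmss \ge c {m \choose a'} b^{-b},
\]
where $b = \lfloor \log_{1/q}(m) \rfloor$.

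Next I would unpack the definition of $a'$ to observe that $|a'/m - \lambda| \le \bigl((\log_{1/q}(m))^2 + 2\bigr)/m$, which tends to $0$ as $m \to \infty$. Since $n \ge q^{-m/16}$ forces $m$ to grow with $m+n$, for $N$ sufficiently large $a'/m$ lies in the interval $[1/32, 1/2]$, on which $H$ is continuous, monotone increasing, and bounded below by $H(1/32) > 0$. Lemma~\ref{improvedbin} then gives ${m \choose a'} \ge \tfrac{1}{m+1}\,2^{H(a'/m) \cdot m}$, so with probability $\ge 1-\epsilon$,
\[
\hoeffmss \ge \frac{c\, b^{-b}}{m+1}\, 2^{H(a'/m) \cdot m}.
\]
Since $\log_2\bigl(c\, b^{-b}/(m+1)\bigr)$ is of order $-(\log m)(\log \log m) = o(m)$, this prefactor costs only a subexponential term, which can be absorbed into the exponent.

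The main obstacle is making the conversion from $H(a'/m)$ to $H(\lambda)$ uniform in $\lambda \in [1/16, 1/2]$. I would handle this via two observations: first, $H$ is uniformly continuous on $[1/32, 1/2]$, so $|H(a'/m) - H(\lambda)|$ vanishes uniformly in $\lambda$ as $m \to \infty$; second, $H(\lambda) \ge H(1/16) > 0$ throughout the range. Together these give $H(a'/m) \ge (1 - \varphi/2)\, H(\lambda)$ for every $\lambda \in [1/16,1/2]$ once $m$ is sufficiently large, and they permit the $o(m)$ prefactor to be absorbed as at most $(\varphi/2)\, H(1/16)\, m \le (\varphi/2)\, H(\lambda)\, m$ in the exponent. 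Combining these two bounds yields $\hoeffmss \ge 2^{(1-\varphi) H(\lambda) m}$ with probability $\ge 1-\epsilon$ whenever $m+n$ exceeds some threshold $N$, which is precisely the claim.
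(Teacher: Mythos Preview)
Your proposal is correct and follows essentially the same route as the paper: invoke Lemma~\ref{lem.hoeffding.exp}, convert ${m\choose a'}$ to an entropy exponent via Lemma~\ref{improvedbin}, and absorb the subexponential prefactor $c\,b^{-b}/(m+1)$ into the slack $\tfrac{\varphi}{2}H(\lambda)m$. The only cosmetic difference is in bridging $H(a'/m)$ and $H(\lambda)$: the paper fixes an auxiliary $\delta$ with $H((1-\delta)\kappa)\ge(1-\tfrac{\varphi}{2})H(\kappa)$ on $[\tfrac{1}{16},\tfrac{1}{2}]$ and uses monotonicity of ${m\choose k}$ to pass from $a'$ to $\lceil(1-\delta)\log_{1/q}(n)\rceil$, whereas you argue directly from uniform continuity of $H$ on $[\tfrac{1}{32},\tfrac{1}{2}]$ together with $|a'/m-\lambda|=o(1)$; both achieve the same inequality $H(a'/m)\ge(1-\tfrac{\varphi}{2})H(\lambda)$ for large $m$.
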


\begin{proof}
Throughout the proof assume $q^{-\frac{m}{16}} \le n \le q^{-\frac{m}{2}}$.

Let $\varphi > 0$.
First we choose a $\delta$ with $0 < \delta < 1$ which satisfies
\begin{equation}
H((1-\delta) \kappa) \ge \left( 1-\frac{\varphi}{2} \right) \cdot H(\kappa) \label{choice.of.delta}
\end{equation}
for all $\kappa \in \left[\tfrac{1}{16},\tfrac{1}{2}\right]$.
This is possible since $H$ is uniformly continuous in $\left[\tfrac{1}{16},\tfrac{1}{2}\right]$ 
and $\min \{H(\kappa) : \kappa \in \left[\tfrac{1}{16},\tfrac{1}{2}\right]\} > 0$.

Let $a' := \lfloor \log_{1/q}(\lfloor n / m^{\log_{1/q}(m)} \rfloor) \rfloor$.
Let $N_1$ be such that when $m+n \ge N_1$
\begin{equation}\label{choice.of.m0}
\lceil (1-\delta) \log_{1/q}(n) \rceil \le a' \le \left\lfloor \tfrac{m}{2} \right\rfloor.
\end{equation}
The choice of $N_1$ is possible since $\delta>0$ and $\log_{1/q}(n) \le \tfrac{m}{2}$.
In the following, we restrict our attention to these $m,n$ with $m+n \ge N_1$.
From \eqref{choice.of.m0} it follows that
\begin{equation}\label{a'.is.bigger}
{m \choose a'} \ge {m \choose \lceil (1-\delta) \log_{1/q}(n) \rceil}.
\end{equation}
Lemma~\ref{improvedbin} gives
\begin{equation}
{m \choose \lceil (1-\delta) \log_{1/q}(n) \rceil} \geq \frac{1}{m+1} \cdot 2^{H(\lceil (1-\delta) \log_{1/q}(n) \rceil m^{-1}) \cdot m}. \label{appl.better.est}
\end{equation}
Since $H(\kappa)$ is monotonically increasing 
for $\kappa \in \left[0,\tfrac{1}{2}\right]$, 
it follows that
$H(\lceil (1-\delta) \log_{1/q}(n) \rceil m^{-1}) \geq H( (1-\delta) \lambda)$, 
where we recall that $\lambda = \log_{1/q}(n) / m$.
As $\tfrac{1}{16}\leq\lambda \leq \tfrac{1}{2}$, 
we get
\begin{equation}
2^{H(\lceil (1-\delta) \log_{1/q}(n) \rceil m^{-1}) \cdot m} 
\geq 2^{H( (1-\delta) \lambda) \cdot m}
\stackrel{\eqref{choice.of.delta}}{\geq} 2^{\left( 1-\frac{\varphi}{2} \right) H(\lambda) \cdot m}. \label{finally1}
\end{equation}
Lemma~\ref{lem.hoeffding.exp} gives an $N_2$ and a constant $c>0$ such that
for $b = \Rside$
\[
\pr\left[\hoeffmss \ge c {m\choose a'} b^{-b} \right] \ge 1-\epsilon,
\]
when $m+n\geq N_2$.
Since $q^{-m/16} \le n \le q^{-m/2}$, there is an $N_3$ such that $m+n \ge N_3$ implies
\[
\frac{c}{m+1} \cdot b^{-b} \cdot 2^{\frac{\varphi}{2} \cdot H(\lambda) \cdot m} \ge 1,
\]
where we use that $\lambda\geq\tfrac{1}{16}$.
For such $m$ and $n$,
\begin{equation}\label{finally2}
\frac{c}{m+1} \cdot b^{-b} \cdot 2^{(1-\frac{\varphi}{2}) \cdot H(\lambda) \cdot m} 
\ge 2^{(1-\varphi) \cdot H(\lambda) \cdot m}.
\end{equation}

Now, taking $N = \max (N_1 , N_2, N_3)$, the inequalities \eqref{a'.is.bigger}, \eqref{appl.better.est}, \eqref{finally1} and \eqref{finally2} finish the proof.
\end{proof}

\begin{lemma}\label{hugerightside}
For every $\alpha$ with $\tfrac{1}{16}\leq\alpha<\tfrac{1}{2}$ and every $\epsilon>0$
there is an $N$ so that for $G\in\rbip(m,n;p)$ 
\[
\pr\left[\av\leq\tfrac{m}{2}\right]\geq 1-\epsilon,
\]
for all $m,n$ with $m+n\geq N$ and $q^{-\frac{m}{16}} \le n \le q^{-\alpha m}$.
\end{lemma}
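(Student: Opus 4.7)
The plan is to follow the template set by Lemmas~\ref{largeleftside}, \ref{mdmsides}, and \ref{largerightside}: bound $\lmsshalf$ from above by a polynomial in $n$ using Lemma~\ref{squpperbound}, bound $\hoeffmss$ from below using Lemma~\ref{asymptotic.lower.bound}, and then close the argument with Lemma~\ref{largeandsmall} (in fact with $\delta=0$). Writing $\lambda:=\log_{1/q}(n)/m$, the hypothesis $q^{-m/16}\le n\le q^{-\alpha m}$ becomes $\lambda\in[\tfrac{1}{16},\alpha]$, which, since $\alpha<\tfrac{1}{2}$, sits strictly inside the range $[\tfrac{1}{16},\tfrac{1}{2}]$ where Lemma~\ref{asymptotic.lower.bound} is available.

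First I would apply Lemma~\ref{squpperbound} (permissible because $\alpha<\tfrac{1}{2}$) to get
\[
\lmsshalf \le 2n^{\log_q(1/4)} = 2\cdot 2^{2\lambda m}
\]
with probability at least $1-\epsilon/2$ once $m+n$ is large enough. For a parameter $\varphi>0$ to be chosen, Lemma~\ref{asymptotic.lower.bound} then provides
\[
\hoeffmss \ge 2^{(1-\varphi)H(\lambda)m}
\]
with probability at least $1-\epsilon/2$ for $m+n$ sufficiently large.

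The main obstacle is to choose $\varphi$ so as to produce a uniform exponential gap between these two bounds, i.e., $(1-\varphi)H(\lambda)>2\lambda$ uniformly on $[\tfrac{1}{16},\alpha]$. The enabling observation is that $\lambda\mapsto H(\lambda)/\lambda$ is strictly decreasing on $(0,\tfrac{1}{2})$ with value $2$ at $\tfrac{1}{2}$ (a short derivative computation). Consequently $\sup_{\lambda\in[1/16,\alpha]} 2\lambda/H(\lambda) = 2\alpha/H(\alpha) < 1$, and any $\varphi\in(0,\,1-2\alpha/H(\alpha))$ yields a uniform positive constant $c_0>0$ with $(1-\varphi)H(\lambda)-2\lambda\ge c_0$ throughout $[\tfrac{1}{16},\alpha]$. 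This is the sole place where the assumption $\alpha<\tfrac{1}{2}$ is genuinely used; the gap collapses as $\lambda\to\tfrac{1}{2}$, which is exactly why the subsequent ranges of $n$ will require the $\delta>0$ slack in $\av\le(\tfrac{1}{2}+\delta)m$.

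To finish, set $\nu:=1-2\alpha>0$. Every maximal stable set counted by $\hoeffmss$ has left side of size at most $\lfloor\log_{1/q}(n)\rfloor\le\alpha m=(1-\nu)\tfrac{m}{2}$, hence lies in the family $\mathcal{S}$ of Lemma~\ref{largeandsmall}; likewise, the family $\mathcal{L}$ (with $\delta=0$) is contained among the sets counted by $\lmsshalf$. The constant gap $c_0$ guarantees that for $m+n$ sufficiently large $\hoeffmss\ge(1/\nu)\lmsshalf$, so with probability at least $1-\epsilon$ we obtain $|\mathcal{S}|\ge(1/\nu)|\mathcal{L}|$, and Lemma~\ref{largeandsmall} delivers $\av\le\tfrac{m}{2}$ as required.
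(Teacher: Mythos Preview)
Your proof is correct and follows essentially the same approach as the paper: bound $\lmsshalf$ via Lemma~\ref{squpperbound}, bound $\hoeffmss$ via Lemma~\ref{asymptotic.lower.bound}, exploit the gap $(1-\varphi)H(\lambda)>2\lambda$ on $[\tfrac{1}{16},\alpha]$, and finish with Lemma~\ref{largeandsmall} using $\nu=1-2\alpha$, $\delta=0$. The only cosmetic difference is that the paper obtains $\varphi$ by a compactness/continuity argument on $\kappa\mapsto (1-\varphi)H(\kappa)-2\kappa$, whereas you obtain it via the monotonicity of $\lambda\mapsto H(\lambda)/\lambda$; both yield the same uniform positive gap.
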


\begin{proof}
Let $\tfrac{1}{16}\leq\alpha<\tfrac{1}{2}$ be given, and assume $q^{-\frac{m}{16}} \le n \le q^{-\alpha m}$.

Note that $2 \kappa < H(\kappa)$ for all $\tfrac{1}{16} \le \kappa \le \alpha$.
Since $H$ is continuous on the compactum $[\tfrac{1}{16},\alpha]$, we may choose $\varphi > 0$ such that for all $\tfrac{1}{16} \le \kappa \le \alpha$, $2 \kappa < (1 - \varphi) H(\kappa)$.
Moreover, we can put $\gamma := \min \{ (1-\varphi) H(\kappa) - 2 \kappa : 
\kappa \in \left[\tfrac{1}{16},\alpha\right]\}$ and have $\gamma > 0$.

By Lemma~\ref{asymptotic.lower.bound}, there is an $N_1$ such that $m+n \ge N_1$ yields
\[
\pr[\hoeffmss \geq 2^{(1-\varphi) \cdot H(\lambda) \cdot m}] \ge 1-\tfrac{\epsilon}{2},
\]
where $\lambda = \log_{1/q}(n) / m$.

By Lemma~\ref{squpperbound}, there is an $N_2$ such that 
$\pr[\lmsshalf > 2^{2 \log_{1/q}(n) + 1}] \leq \epsilon/2$ when $m+n\geq N_2$.
Let $N_3 = \max(N_1,N_2)$ and assume $m+n \ge N_3$.
With probability $1-\epsilon$,
\[
\hoeffmss / \lmsshalf
\geq 2^{(1-\varphi) \cdot H(\lambda) \cdot m} \cdot 2^{- 2 \log_{1/q}(n) - 1} \ge 2^{\gamma m - 1}.
\]
Thus, there is an $N \ge N_3$ such that $\hoeffmss / \lmsshalf \ge (1- 2\alpha)^{-1}$, whenever $m+n\geq N$.
Lemma~\ref{largeandsmall} (with $\delta = 0$ and $\nu = 1 - 2 \alpha$) completes the proof.
\end{proof}

\subsection{The case $q^{-\frac{m}{2}} \le n \le q^{-m^3}$}\label{sect.m.to.the.3}

Once the size $n$ of the right side reaches $q^{-\frac{m}{2}}$, 
the expected average left side of a maximal stable set becomes very close
to $\tfrac{m}{2}$, so close in fact that the methods developed so far
begin to fail: We cannot any longer show that the average is at most $\tfrac{m}{2}$.

The two main obstacles we face are: Firstly, when $n$ approaches $q^{-m/2}$
the upper bound on $\lmsshalf$, Lemma~\ref{squpperbound}, becomes useless as it reaches 
$2^m$. Secondly, the maximality requirement 
for maximal stable sets of small left side becomes harder to satisfy. Recall that we focus on 
left sides of size $\approx \log_{1/q}(n)$ because with this size the 
maximality requirement eliminates only a constant proportion of the possible small maximal stable sets. 
However, when $n$ surpasses $q^{-\frac{m}{2}}$, 
the maximal stable sets with left side $\log_{1/q}(n)$ can no longer be considered 
as \emph{small},
since $\log_{1/q}(n)>\tfrac{m}{2}$.

Therefore we lower our goals and aim instead for an average of at most $(\tfrac{1}{2}+\delta)m$,
for any given $\delta>0$.
Then 
the large maximal stable sets, those with left side $> (\tfrac{1}{2}+\delta)m$,
suddenly make up a significantly smaller proportion of the power set.

The key to that observation lies in the following basic lemma, 
a version of which can 
be found in, for instance, van Lint~\cite[Theorem~1.4.5]{LintBook99}.

\begin{lemma}\label{upper.bin}
For all $\tfrac{1}{2}<\gamma< 1$ 
it holds that
\[
\sum_{i=\lceil\gamma m\rceil}^{m}{m\choose i}\leq 2^{H(1-\gamma) m}.
\]
Moreover, $H(1-\gamma)<1$.
\end{lemma}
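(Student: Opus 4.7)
The plan is to derive the inequality from the binomial identity $1 = \sum_{i=0}^{m}\binom{m}{i}\gamma^{i}(1-\gamma)^{m-i}$, which holds because $\gamma\in(0,1)$. Restricting the sum to $i\geq\lceil\gamma m\rceil$ only decreases it, so
\[
1\;\geq\;\sum_{i=\lceil\gamma m\rceil}^{m}\binom{m}{i}\gamma^{i}(1-\gamma)^{m-i}.
\]

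The key observation is that, because $\gamma>\tfrac{1}{2}$, the ratio $\gamma/(1-\gamma)$ exceeds $1$, so the term $\gamma^{i}(1-\gamma)^{m-i}=(1-\gamma)^{m}\bigl(\gamma/(1-\gamma)\bigr)^{i}$ is strictly increasing in $i$. Hence for every $i\geq\lceil\gamma m\rceil\geq\gamma m$ we have
\[
\gamma^{i}(1-\gamma)^{m-i}\;\geq\;\gamma^{\gamma m}(1-\gamma)^{(1-\gamma)m}.
\]
Pulling this factor out of the sum and rearranging yields
\[
\sum_{i=\lceil\gamma m\rceil}^{m}\binom{m}{i}\;\leq\;\gamma^{-\gamma m}(1-\gamma)^{-(1-\gamma)m}.
\]

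To finish, I would simply rewrite the right-hand side in entropic form. By definition,
\[
H(1-\gamma)\,m\;=\;(1-\gamma)\log_{2}\!\tfrac{1}{1-\gamma}\cdot m+\gamma\log_{2}\!\tfrac{1}{\gamma}\cdot m,
\]
so that $2^{H(1-\gamma)m}=(1-\gamma)^{-(1-\gamma)m}\gamma^{-\gamma m}$, giving the desired bound.

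For the second assertion, recall (as noted just before the lemma) that $H(\kappa)<1$ for every $\kappa\in(0,1)\setminus\{\tfrac{1}{2}\}$. Since $\gamma>\tfrac{1}{2}$ implies $1-\gamma<\tfrac{1}{2}$, so in particular $1-\gamma\neq\tfrac{1}{2}$, we conclude $H(1-\gamma)<1$. There is no real obstacle here; the only thing one must notice is to choose $p=\gamma$ (rather than $p=1-\gamma$) in the binomial identity so that the exponential factor is monotone in the correct direction on the range $i\geq\gamma m$.
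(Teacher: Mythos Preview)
Your argument is correct and is precisely the standard proof of this entropy bound on binomial tails. The paper does not supply its own proof of this lemma but simply refers to van Lint~\cite[Theorem~1.4.5]{LintBook99}; the argument you give is essentially the one found there.
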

%

\begin{lemma}\label{giganticrightside}
For every $\delta>0$ and $\epsilon>0$ there is an $N$ and an $\alpha<\tfrac{1}{2}$ 
so that for $G\in\rbip(m,n;p)$
\[
\pr\left[\av\leq(\tfrac{1}{2}+\delta)m\right]\geq 1-\epsilon,
\]
for all $m,n$ with $m+n\geq N$ and $q^{-\alpha m} \le n \le q^{-m^3}$.
\end{lemma}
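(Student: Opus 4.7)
The plan is to invoke Lemma~\ref{largeandsmall} with some $\nu<2\delta$, after establishing a \emph{deterministic} upper bound on the number of maximal stable sets of left side $\geq(\tfrac12+\delta)m$ and a \emph{probabilistic} lower bound on those of left side $\leq(1-\nu)m/2$.

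The deterministic side is the easier one. Every maximal stable set $S$ is uniquely determined by its left part, since maximality forces $S\cap R(G)=R(G)\setminus N(S\cap L(G))$. Hence by Lemma~\ref{upper.bin},
\[
\lmssdelta\leq\sum_{i\geq(\tfrac12+\delta)m}\binom{m}{i}\leq 2^{H(\tfrac12-\delta)m},
\]
and this is strictly smaller than $2^m$, because $H(\tfrac12-\delta)<1$. This is exactly the exponential gap one buys by weakening the target from $\tfrac{m}{2}$ to $(\tfrac12+\delta)m$.

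For the lower bound I will fix $\alpha=\tfrac12-\tfrac{\delta}{2}$, $\nu=\tfrac{3\delta}{2}$, and $\ell=\lfloor(1-\nu)m/2\rfloor$, and count subsets $L_0\subseteq L(G)$ of size $\ell$ that appear as left sides of maximal stable sets. Given such an $L_0$, the set $S=L_0\cup(R(G)\setminus N(L_0))$ is automatically stable, and is additionally maximal iff every $u\in L(G)\setminus L_0$ has a neighbour outside $N(L_0)$. A direct independence argument (the edges from $u$ to $R(G)$ are independent of those from $L_0$ to $R(G)$) shows this last probability is at least $1-m(1-pq^\ell)^n\geq 1-m\exp(-npq^\ell)$. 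With the chosen parameters one has $\alpha m-\ell\geq\tfrac{\delta}{4}m$ and therefore $npq^\ell\geq p(1/q)^{\delta m/4}$, which is exponentially large in $m$; so the probability that a fixed $L_0$ fails is super-exponentially small. Markov's inequality applied to the count of \emph{bad} subsets of size $\ell$ then yields, with probability at least $1-\epsilon$ for $m$ large, at least $\tfrac12\binom{m}{\ell}\geq 2^{H((1-\nu)/2)m}/(2(m+1))$ maximal stable sets with left side of size $\ell\leq(1-\nu)m/2$, by Lemma~\ref{improvedbin}. Since $H$ is monotone on $[0,\tfrac12]$ and $\nu<2\delta$ means $(1-\nu)/2>\tfrac12-\delta$, we have $H((1-\nu)/2)>H(\tfrac12-\delta)$, and the exponential separation swallows the polynomial factor, so the ratio of the two counts eventually exceeds $\tfrac{1}{\nu}$. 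Lemma~\ref{largeandsmall} then delivers $\av\leq(\tfrac12+\delta)m$.

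The main difficulty lies in satisfying three competing constraints on the parameters at once: (i) $\alpha<\tfrac12$, required by the statement; (ii) $\nu<2\delta$, so that the small-side entropy $H((1-\nu)/2)$ beats the large-side entropy $H(\tfrac12-\delta)$; and (iii) $\nu>1-2\alpha$, so that $\ell<\alpha m$ and a random $L_0$ extends to a maximal stable set with probability tending to $1$. These three constraints are simultaneously satisfiable precisely when $\alpha>\tfrac12-\delta$, which is exactly where the slack in $\delta$ is spent---explaining why the weakened target $(\tfrac12+\delta)m$ is essential in this regime. The upper bound $n\leq q^{-m^3}$ plays no real role in the argument; it is kept only for a clean separation from the final case.
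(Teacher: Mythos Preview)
Your argument is correct and genuinely different from the paper's. Both proofs start with the same deterministic upper bound
\[
\lmssdelta\leq 2^{H(1/2-\delta)m}
\]
from Lemma~\ref{upper.bin}, and both finish with Lemma~\ref{largeandsmall}. The difference lies in how the lower bound on the number of maximal stable sets with small left side is obtained. The paper \emph{truncates} the right side to $n'=\lceil q^{-(1/2-\delta/3)m}\rceil$ vertices, thereby placing the auxiliary graph $G'$ back into the range $q^{-m/16}\leq n'\leq q^{-m/2}$ where Lemma~\ref{asymptotic.lower.bound} (and hence the Hoeffding machinery of Lemma~\ref{lem.hoeffding.exp}) applies; this yields $\mathcal S'_{G'}\geq 2^{H(1/2-\delta/2)m}$, which is then compared to $\lmssdelta$. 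You instead argue directly on $G$: for $\ell\approx(\tfrac12-\tfrac{3\delta}{4})m<\alpha m$, the probability that a fixed $L_0$ of size $\ell$ fails to be the left side of a maximal stable set is at most $m(1-pq^\ell)^n\leq m\exp(-p(1/q)^{\delta m/4})$, which is doubly-exponentially small; a first-moment bound on the count of bad $L_0$'s then gives $\geq\tfrac12\binom{m}{\ell}$ good ones with high probability. This is more elementary and self-contained, bypassing both the truncation and the Hoeffding step. The price is that your argument is specific to this regime, whereas the paper reuses existing lemmas.

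One small correction: the condition $n\leq q^{-m^3}$ is \emph{not} idle in your argument either. All of your ``for $m$ large'' steps (the entropy gap, the doubly-exponential decay, etc.) require $m\to\infty$ as $m+n\to\infty$, and this is exactly what $n\leq q^{-m^3}$ guarantees; without it $m$ could remain bounded while $n\to\infty$. Also, when you pass from $\binom{m}{\ell}$ to $2^{H((1-\nu)/2)m}/(m+1)$ you should account for the floor in $\ell$: for $m$ large one has $\ell/m\geq \tfrac12-\tfrac{7\delta}{8}$, say, which still gives a fixed positive entropy gap over $H(\tfrac12-\delta)$.
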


\begin{proof}
For $G\in\rbip(m,n;p)$, 
let us denote by $\lmssdelta$ the number of maximal stable sets $S$ with $|L(G) \cap S| \ge \left(\tfrac{1}{2} + \delta\right) m$.

Note that 
\begin{equation}\label{fewdeltalarge}
\lmssdelta \le \sum_{i=\lceil (1/2+\delta)m \rceil}^{m}{m\choose i}\leq
2^{H(1/2-\delta) m}, 
\end{equation}
where we used Lemma~\ref{upper.bin} for the second inequality.

We show now that, with high probability,  there are many more maximal stable sets
with small left side in a random graph $G\in\rbip(m,n;p)$, 
if $q^{-\alpha m} \le n \le q^{-m^3}$ and $m+n\geq N$, for an $N$ 
and an $\alpha<\tfrac{1}{2}$ 
that we will determine below. 
In order to do so, note first that 
we may assume  $\delta$ to be small enough so that 
$\alpha':=\tfrac{1}{2} - \tfrac{\delta}{3} \ge \tfrac{1}{16}$.
Next, fix $n'(m)=n' := \lceil q^{- \alpha' m} \rceil$ and 
delete arbitrary $n-n'$ vertices from $R(G)$. The resulting graph $G'$
may be viewed as a random graph in $\rbip(m,n';p)$, and we will see 
that with probability $\geq 1-\epsilon$ it contains many maximal stable sets
with small left side. More precisely, we will prove that 
\begin{equation}\label{deltaaim}
\mathcal{S}'_{G'}\geq \tfrac{3}{2\delta}\lmssdelta 
\end{equation}
with probability at least $1-\epsilon$. 
Note that the maximal stable sets counted by $\mathcal{S}'_{G'}$ 
have a left side of size  
\[
a'=\lfloor \log_{1/q}(\lfloor n' / m^{\log_{1/q}(m)} \rfloor) \rfloor\leq \alpha'm
= \left(1-\tfrac{2}{3}\delta\right)\tfrac{m}{2}.
\] 
Since every maximal stable set of $G'$ extends to a 
maximal stable set of $G$ with the same left side, we may then 
use Lemma~\ref{largeandsmall} with $\nu=\tfrac{2}{3}\delta$ to conclude
that $\av\leq\left(\tfrac{1}{2}+\delta\right)m$.

\medskip
Let us now see how we need to choose $N$ and $\alpha$ in order to guarantee~\eqref{deltaaim}, 
which is all we need to finish the proof.
For $\alpha$, we could take $\alpha'$ if it were not for the fact that we round 
up $q^{-\alpha'm}$ to get $n'$ (which turns out to be useful below). 
So we simply choose $\alpha$ to be somewhat larger than $\alpha'$:
Let $\alpha=\tfrac{1}{2} - \tfrac{\delta}{4}>\alpha'$ and choose $N_1$ 
large enough so that $q^{-\alpha m}\geq n'=\lceil q^{- \alpha' m} \rceil$
for all $m,n$ with $m+n\geq N_1$ and $n\leq q^{-m^3}$. 
(This is possible as $n\leq q^{-m^3}$ implies that $m$ has to be 
large as well if $m+n$ is large.)
Throughout the rest of the proof we will always assume that $m,n$ 
are integers with $q^{-\alpha m}\leq n\leq q^{-m^3}$.

Next, as $H$ is monotonously increasing in the 
interval $\left[0,\tfrac{1}{2}\right]$, it follows that 
\[
\varphi:=
1-\frac{H\left(\frac{1}{2} - \frac{\delta}{2}\right)}
{H\left(\alpha'\right)}
=1-\frac{H\left(\frac{1}{2} - \frac{\delta}{2}\right)}
{H\left(\frac{1}{2} - \frac{\delta}{3}\right)} < 1.
\] 
Applying Lemma~\ref{asymptotic.lower.bound} with $\epsilon$ and $\varphi$
yields an integer $N'$. Choose $N_2\geq N_1$ large enough so 
that $m+n\geq N_2$ implies $m+n'\geq N'$.
(Again, this is possible as $m$ has to be large if $m+n$ is large.)
Then, as 
$\alpha'  \ge \tfrac{1}{16}$, which in turn leads to $n' \ge q^{-\frac{m}{16}}$,
we obtain for $G'$ that
\[
\pr[\mathcal{S}'_{G'} \geq 2^{(1-\varphi) \cdot H(\log_{1/q}(n') / m) \cdot m}] \ge 1-\epsilon,
\]
for all $m$ with $m+n\geq N_2$.
Note that for $m,n$ with $m+n\geq N_2$ 
\[
H(\log_{1/q}(n') / m) = H(\log_{1/q}(\lceil q^{- \alpha' m} \rceil) / m) \ge H(\log_{1/q}(q^{- \alpha' m}) / m) = H(\alpha')
\]
and thus
\[
2^{(1-\varphi) \cdot H(\log_{1/q}(n') / m) \cdot m} \ge 2^{(1-\varphi) \cdot H(\alpha') \cdot m} = 2^{H(1/2 - \delta/2) \cdot m}.
\]
Put $\mu := H(1/2 - \delta/2)-H(1/2 - \delta)$ and
note that $\mu > 0$.
Inequality~\eqref{fewdeltalarge} gives that with probability $1-\epsilon$,
\begin{equation*}
\mathcal{S}'_{G'} / \lmssdelta \ge 2^{(H(1/2 - \delta/2)-H(1/2 - \delta)) \cdot m} = 2^{\mu m}.
\end{equation*}
Finally, choosing $N\geq N_2$ large enough so that $2^{\mu m}\geq \tfrac{3}{2}\delta$
for all $m,n$ with $m+n\geq N$ ensures~\eqref{deltaaim}.
Again, this is possible as $m$ grows with $m+n$.
\end{proof}

For the proof technique to work, we need $G'$ to be a large graph. Otherwise,
Lemma~\ref{asymptotic.lower.bound} cannot guarantee a high probability. 
In particular,  $m$ has to grow with $m+n$, which is why we assumed $n \le q^{-m^3}$.

\subsection{The case $q^{-m^3} \le n$ and 
proof of Theorem~\ref{realmainthm}}\label{sect.large.n}

If the right side of the random bipartite graph $G$ is huge in 
comparision to the left side, that is, if
$q^{-m^3} \le n$,
then almost surely $L(G)$ may be inductively matching into 
the right side. As a consequence, 
 the set of left sides of maximal stable sets 
is equal to the power set of $L(G)$, and thus $\av=\tfrac{m}{2}$.

\begin{lemma}\label{veryverylargeside}
For every $\epsilon>0$
there is an $N$ so that for $G\in\rbip(m,n;p)$ 
\[
\pr\left[\av\leq\tfrac{m}{2}\right]\geq 1-\epsilon,
\]
for all $m,n$ with $m+n\geq N$ and $q^{-m^3} \le n$.
\end{lemma}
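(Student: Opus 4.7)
The plan is to show that, with high probability, $G$ contains an induced matching saturating $L(G)$, and then to deduce from this combinatorial event that in fact $\av=\tfrac{m}{2}$ exactly (which is stronger than what the lemma asks for).

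\textbf{Step 1: existence of the saturating induced matching.} Call $u\in R(G)$ \emph{private to} $v\in L(G)$ if $u$'s unique neighbour in $L(G)$ is $v$. For a fixed $v$, a given $u\in R(G)$ is private to $v$ with probability $pq^{m-1}$, independently across the $n$ candidates, so
\[
\pr[v\text{ has no private neighbour}]\le (1-pq^{m-1})^n\le e^{-npq^{m-1}}.
\]
Under the hypothesis $n\ge q^{-m^3}$ one has $npq^{m-1}\ge pq^{-(m^3-m+1)}$, which grows ferociously fast with $m$; and since $n\ge q^{-m^3}$ forces $m$ to grow whenever $m+n$ does, a union bound over the $m$ vertices of $L(G)$ yields an $N$ such that, for all $m+n\ge N$, every $v\in L(G)$ has a private neighbour with probability at least $1-\epsilon$. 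Vertices private to distinct $v$'s are automatically distinct (each has a unique neighbour in $L(G)$), so selecting one private neighbour per $v$ produces an induced matching $\{v_iu_i:1\le i\le m\}$ saturating $L(G)$.

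\textbf{Step 2: from the matching to $\av=\tfrac{m}{2}$.} Fix such a matching and write $W=\{u_1,\dots,u_m\}$, $R'=R(G)\setminus W$. Let $A$ be any maximal stable set and $T=A\cap L(G)$. For each $i$: not both of $v_i,u_i$ lie in $A$ (edge), and if $u_i\notin A$ then maximality together with the fact that $v_i$ is the sole neighbour of $u_i$ in $L(G)$ forces $v_i\in A$. Hence $A\cap W=\{u_i:v_i\notin T\}$. Similarly, a vertex $u\in R'$ lies in $A$ iff $N(u)\cap T=\emptyset$: otherwise stability fails, and if $N(u)\cap T=\emptyset$ then $u$ must be in $A$ by maximality (one could add it otherwise). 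Thus $A$ is completely determined by $T$. Conversely, for any $T\subseteq L(G)$ the set
\[
A_T:=T\cup\{u_i:v_i\notin T\}\cup\{u\in R':N(u)\cap T=\emptyset\}
\]
is directly checked to be maximally stable. Hence the maximal stable sets of $G$ are in bijection with $2^{L(G)}$, giving
\[
\av=\frac{1}{2^m}\sum_{T\subseteq L(G)}|T|=\frac{m}{2}.
\]

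\textbf{Main obstacle.} There really is none worth the name: the gap between $n\ge q^{-m^3}$ and the required $npq^{m-1}\to\infty$ is so enormous that the probabilistic step is far softer than in all of the preceding cases, and Step~2 is a direct structural verification. Accordingly this is genuinely the easiest case in the proof of Theorem~\ref{realmainthm}.
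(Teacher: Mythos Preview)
Your argument is correct and follows the same overall strategy as the paper: show that with high probability every vertex of $L(G)$ has a ``private'' neighbour in $R(G)$ (equivalently, an induced matching saturating $L(G)$), and deduce that the maximal stable sets are in bijection with $2^{L(G)}$, whence $\av=\tfrac{m}{2}$ exactly. Your private-neighbour calculation via a union bound is a pleasant variant of the paper's method, which instead partitions $R(G)$ into $\lfloor n/m\rfloor$ blocks of size $m$ and waits for one block to induce a perfect matching with $L(G)$; both routes yield the same saturating matching, and your Step~2 spells out the bijection more explicitly than the paper does.

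One small slip: the claim ``$n\ge q^{-m^3}$ forces $m$ to grow whenever $m+n$ does'' is false (take $m=1$ and $n\to\infty$). Your bound $m\cdot e^{-npq^{m-1}}$ still tends to~$0$, but you need to argue the case of bounded $m$ separately: there $n\to\infty$ because $m+n\ge N$, and $q^{m-1}$ is bounded away from~$0$, so $npq^{m-1}\to\infty$ regardless. With that patch the proof is complete.
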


\begin{proof}
Consider positive integers $m,n$ with $n \ge q^{-m^3}$.
We will give an $N$ such that for all such  $m,n$ with $m+n \ge N$ there are,
with probability $1 - \epsilon$, exactly $2^m$ maximal stable sets in $G$.
Then, every subset of $L(G)$ is the left side of a maximal stable set, 
which implies $\av = \tfrac{m}{2}$.

We proceed in a similar way as in
 the proof of Lemma~\ref{indmatchings} and therefore skip some of the details.
Let $R_1,\ldots,R_{\lfloor n/m\rfloor}$ be pairwise disjoint subsets of $R(G)$, of size $m$ each. 
For $i=1,\ldots,\lfloor n/m\rfloor$ let $M_i$
be the random indicator variable for an induced matching of size $m$ on $L(G)\cup R_i$.
Since $n \geq q^{-m^3}$, $n \geq m$ and so $\pr\left[M_i = 1 \right] \geq p^m q^{m^2}$.
Thus 
\[
\pr\left[\sum_{i=1}^{\lfloor n/m\rfloor}M_i=0\right]\leq \left(1-p^m q^{m^2} \right)^{\lfloor n/m\rfloor}
\leq e^{-p^m q^{m^2}{\lfloor n/m\rfloor}},
\]
where we use that $1+x \le e^x$ for all $x\in\mathbb R$.
Since $n \geq q^{-m^3}$, the term $p^m q^{m^2}{\lfloor n/m\rfloor}$ becomes arbitrarily large for large $m+n$. 
Thus, there is an $N$ so that 
$\pr\left[\sum_{i=1}^{\lfloor m/k\rfloor}M_i=0\right]\leq\epsilon$
for all $m,n$ with $m+n\geq N$ and $n \geq q^{-m^3}$.

Now assume that there is an induced matching on $L(G)\cup R_i$ of size $m$ for some $1 \le i \le \lfloor n/m\rfloor$.
Then are $2^m$ many maximal stable sets of the graph $G[L(G)\cup R_i]$ and each can be extended to a maximal stable set of $G$ without changing its left side. 
This completes the proof.
\end{proof}

Having exhausted all of the parameter space $(m,n)$, we may 
finally prove our main theorem.
\begin{proof}[Proof of Theorem~\ref{realmainthm}]
For given $\epsilon>0$ and $\delta>0$ choose $N_1$ and $\alpha<\tfrac{1}{2}$ as in
Lemma~\ref{giganticrightside}. Then let $N$ be at least as large as $N_1$ and the 
$N$ in Lemmas~\ref{largeleftside},~\ref{mdmsides},~\ref{largerightside},~\ref{hugerightside} (with $\alpha$ as chosen) and~\ref{veryverylargeside}. Then the theorem follows.
\end{proof}

\comment{
\section{Discussion}

A natural question arising is what happens when the edge-probability 
$p$ is no longer constant but depending on $N=m+n$. Although it seems that
our result could be generalized in that direction, we do not want to
complicate the analysis too much.


\begin{itemize}
\item Yes, we know, we can make $p$ dependent on $N$ -- however, we couldn't care 
less.
\item What does it mean for Frankl's conjecture?
\end{itemize}
}

\section*{Acknowledgements} We thank Carola Doerr for 
inspiring discussions and help with some of
the probabilistic arguments.
The second author is supported by a post-doc grant of the Fondation Sciences Math\'ematiques de Paris.

\bibliographystyle{amsplain}
\bibliography{../UnionClosedSurvey/survey/ucsbib}

\small
\vskip2mm plus 1fill
\noindent
Version 18 Feb 2013
\bigbreak

\noindent
Henning Bruhn
{\tt <bruhn@math.jussieu.fr>}\\
Oliver Schaudt
{\tt <schaudt@math.jussieu.fr>}\\[3pt]
Combinatoire et Optimisation\\
Universit\'e Pierre et Marie Curie\\
4 place Jussieu\\
75252 Paris cedex 05\\
France

\end{document}